\documentclass[11pt]{article}

\usepackage{amsfonts,amssymb}

\usepackage{amsthm}

\usepackage{amsmath}

\usepackage{amscd}
\usepackage{tabularx}
\usepackage{arydshln}
\usepackage{verbatim}
\usepackage{color}
\usepackage{a4wide}

\usepackage{tikz,tikz-cd} 
\usetikzlibrary{positioning,calc,decorations.pathreplacing}
\usepackage[all]{xy}

\theoremstyle{plain}

\usepackage{tikz-cd}
\tikzset{commutative diagrams/.cd}
\usetikzlibrary{intersections}
\usepackage{ mathrsfs }

\newtheorem{theorem}{Theorem}[section]
\newtheorem{lemma}[theorem]{Lemma}
\newtheorem{proposition}[theorem]{Proposition}

\newtheorem{Counter-example}[theorem]{Counter-example}
\newtheorem{remark}[theorem]{Remark}

\theoremstyle{definition}

\newtheorem{definition}[theorem]{Definition}

\theoremstyle{remark}

\usepackage{amssymb}

\def\R{\mathbb R}

\usetikzlibrary{positioning}
\usetikzlibrary{calc}

\title{\textbf{A Cartan geometry from a correspondence space in Riemannian geometry}}

\author{ 
Rodrigo Morón\footnote{Corresponding author.
}\\ 
 \texttt{Departamento de Matemáticas, Universidad de León, León, España} \\ 
 \texttt{$rmors@unileon.es$}\\
 \texttt{https://orcid.org/0000-0001-7651-5965}
 \and 
  Francisco J. Palomo\\ 
 \texttt{Departamento de Matemática Aplicada, Universidad de Málaga, Málaga, España} \\ 
 \texttt{$fpalomo@uma.es$}\\
  \texttt{https://orcid.org/0000-0002-1852-0667}
\date{\empty}
}

\begin{document}

\maketitle

\begin{abstract}
\noindent Cartan geometries are curved analogues of homogeneous spaces, and the correspondence space construction produces, from a Cartan geometry of one type, another of a different type over a larger base manifold. The unit tangent bundle of a Riemannian manifold is, in this language, a Cartan geometry of type $(\operatorname{Euc}(n),O(n-1))$ obtained in this way; we study Cartan geometries of this type in general, whether or not they arise as correspondence spaces. We show that every such geometry induces on its base manifold an almost contact metric structure, together with an orthogonal splitting of the tangent bundle and a compatible linear connection; conversely, these data determine the geometry, so that the correspondence is a bijection. When $n\ge3$, we single out a distinguished Cartan connection, which we call normal, by a condition on its torsion formulated in terms of the Spencer differential. This normalization is dictated by the correspondence space construction: the Cartan connection induced on the unit tangent bundle of a Riemannian manifold is normal.
\end{abstract}

\section{Introduction}
In Klein's Erlangen program \cite{Klein}, a geometry is described by a homogeneous space $G/H$, where $G$ is a Lie group acting as its group of symmetries and $H\subset G$ is a closed subgroup given by the stabilizer of a point. Cartan geometries \cite{CartanAffine,CartanHol} provide a unified description of a wide variety of geometric structures, obtained by taking such a homogeneous space as a model; in this sense, they can be seen as the curved analogues of homogeneous spaces, see Section~2. A Cartan geometry modeled on $G/H$ over a smooth manifold $M$ consists of an $H$-principal fiber bundle $\mathcal P\to M$ together with a $\mathfrak g$-valued $1$-form $\omega$ on it, called a Cartan connection, where $\mathfrak g$ denotes the Lie algebra of $G$. We refer to such a geometry as a Cartan geometry of type $(G,H)$ on $M$. The bundle $\mathcal P\to M$ is best understood when the model is first-order, meaning that $H$ acts faithfully on $\mathfrak g/\mathfrak h$: it is then a reduction of the frame bundle of $M$ to the structure group $H$. On the other hand, when the model is reductive, that is, $\mathfrak h$ admits an $H$-invariant complement $\mathfrak m$ in $\mathfrak g$, the Cartan connection splits as the sum of an $H$-principal connection and a distinguished $1$-form, the so-called soldering form~\eqref{21072026}. Reductions of the frame bundle and associated canonical connections have been studied for a long time \cite{Chern,Kobayashi,SV}, and reductive homogeneous spaces, together with their invariant connections, go back to \cite{Nomizu}. A general framework combining the first-order and reductive settings for Cartan geometries was developed in \cite{AM}. The classical example of a first-order reductive Cartan geometry is Riemannian geometry, regarded as a Cartan geometry modeled on Euclidean space $\mathbb E^n=\operatorname{Euc}(n)/O(n)$, see Section~\ref{22072026}.

Cartan geometries of one type can be used to produce Cartan geometries of another type, by means of the correspondence space construction. Given closed subgroups $H\subset P\subset G$ and a Cartan geometry $(\mathcal P\to M,\omega)$ of type $(G,P)$, the quotient $\mathcal C(M):=\mathcal P/H$ is a fiber bundle over $M$ with fiber $P/H$, and $\mathcal P$, equipped with the same $1$-form $\omega$, becomes a Cartan geometry of type $(G,H)$ on $\mathcal C(M)$. The curvature of such a geometry vanishes along the $\mathfrak p/\mathfrak h$ directions and, conversely, a geometry of type $(G,H)$ whose curvature satisfies this condition, together with a suitable integrability assumption, descends locally to one of type $(G,P)$. This converse allows one to start from an arbitrary Cartan geometry of type $(G,H)$ and to determine exactly when it arises from a Cartan geometry of type $(G,P)$. Correspondence spaces were introduced in \cite{Cap2005}, and we follow the treatment of \cite[Section~1.5.13]{CS09}.

Our starting point is a family of Cartan geometries, each obtained as the correspondence space of a Cartan geometry of type $(\operatorname{Euc}(n),O(n))$. To be precise, let $O(n-1)\subset O(n)$ be the stabilizer of a unit vector of $\mathbb R^n$, so that at the homogeneous level
\[
\operatorname{Euc}(n)/O(n)\cong\mathbb E^n,\qquad O(n)/O(n-1)\cong\mathbb S^{n-1},\qquad \operatorname{Euc}(n)/O(n-1)\cong\mathcal U\mathbb E^n,
\]
where $\mathcal U$ denotes the unit tangent bundle. Since the model $\operatorname{Euc}(n)/O(n)$ is first-order, the $O(n)$-principal fiber bundle of a Cartan geometry of type $(\operatorname{Euc}(n),O(n))$ on a manifold $M$ is a reduction of the frame bundle of $M$; its total space is therefore the orthonormal frame bundle $\mathcal F_g(M)$ of a Riemannian metric $g$ on $M$. The quotient $\mathcal F_g(M)/O(n-1)$ is the unit tangent bundle $\mathcal U M$, so that $\mathcal F_g(M)$, equipped with the same Cartan connection, becomes a Cartan geometry of type $(\operatorname{Euc}(n),O(n-1))$ on $\mathcal U M$. The geometry of these unit tangent bundles has been studied at length: their Sasaki metric, their contact metric structure, and their canonical distributions are all well documented; see \cite{Sasaki,YI73,Tashiro,Blair1}. A general Cartan geometry of type $(\operatorname{Euc}(n),O(n-1))$, however, need not arise from that construction. This raises the question:
\begin{quote}
what is a Cartan geometry of type $(\operatorname{Euc}(n),O(n-1))$, whether or not it arises as a correspondence space?
\end{quote}

The model $\operatorname{Euc}(n)/O(n-1)$ is again first-order and reductive, so that Cartan geometries of this type share the features described above. One may wonder why Cartan geometries should be used at all, since what is obtained is a first-order structure, for which a language of its own is already available. The reason is that a single $1$-form, the Cartan connection, encodes at once the first-order structure, through its soldering part, and a connection compatible with it, through its principal part. This viewpoint brings with it tools of clear geometric meaning, such as the curvature and the torsion of the Cartan connection. Moreover, the correspondence space construction, which is naturally formulated in this framework, motivates the class of geometries studied here and, as we shall see, guides the choice of a distinguished Cartan connection.

A question of the same kind has been considered in Lorentzian signature, where the counterpart of the unit tangent bundle is the observer space of a spacetime, the bundle of future-directed unit timelike vectors. Using Cartan geometry, Gielen and Wise \cite{GW} study the structure that a spacetime induces on its observer space, define an abstract observer space geometry to be a Cartan geometry modeled on the observer space of a homogeneous spacetime, and give conditions under which a spacetime can be recovered from such a geometry as a quotient. Their concern, however, is gravitational physics, rather than the two problems we take up in the Riemannian setting: to determine the underlying structure of a Cartan geometry of type $(\operatorname{Euc}(n),O(n-1))$, and to single out a distinguished Cartan connection.
 
Concerning the first, every Cartan geometry of type $(\operatorname{Euc}(n),O(n-1))$ induces on its $(2n-1)$-dimensional base manifold an almost contact metric structure, together with an orthogonal splitting of the tangent bundle and a compatible linear connection; conversely, these data determine the geometry, so that the correspondence is a bijection (Theorem~\ref{thm:euc-onminusone-equivalence}). Concerning the second, when $n\ge3$, a condition on the torsion of the Cartan connection singles out a preferred one, which we call normal (Theorem~\ref{thm:normal-adapted-connection}). The correspondence space construction is what motivates this normalization condition: given a torsion-free Cartan geometry of type $(\operatorname{Euc}(n),O(n))$, the Cartan connection of its correspondence space is normal (Proposition~\ref{prop:normal-compatible-with-riemannian-correspondence}).

The paper is organized as follows. Section~2 collects preliminary material on tangent and unit tangent bundles, and on Cartan geometries and correspondence spaces. Section~3 recalls how Riemannian geometry is described as a Cartan geometry of type $(\operatorname{Euc}(n),O(n))$.
 
Section~4 contains the main results. Subsection~\ref{subsec:euc-on-minus-one-model} analyzes the homogeneous model $\operatorname{Euc}(n)/O(n-1)$. Writing $\mathfrak g$ and $\mathfrak h$ for the Lie algebras of $\operatorname{Euc}(n)$ and $O(n-1)$, the quotient splits as
\[
\mathfrak g/\mathfrak h=\mathbb R e_0\oplus E\oplus B,
\]
where $\mathbb R e_0$ is the line spanned by a vector $e_0\in\mathfrak g/\mathfrak h$ and $E$ and $B$ are two copies of $\mathbb R^{n-1}$. The quotient adjoint representation of $O(n-1)$ fixes $e_0$ and acts by the standard representation on $E$ and on $B$; it follows that the model is first-order, and that $\mathfrak h$ admits an $O(n-1)$-invariant complement $\mathfrak m$ identified with $\mathfrak g/\mathfrak h$, so that the model is reductive. On $\mathfrak m$ we introduce an $O(n-1)$-equivariant endomorphism $\psi$ with
\[
\psi(e_0)=0,\qquad \psi(E)=B,\qquad \left.\psi^2\right|_{E\oplus B}=-\operatorname{Id},
\]
together with a natural $O(n-1)$-invariant Euclidean inner product.

Subsection~\ref{subsec:structure-induced} shows that every Cartan geometry of type $(\operatorname{Euc}(n),O(n-1))$ induces on its $(2n-1)$-dimensional base manifold $N$ a Riemannian metric $\bar g$, a unit vector field $U$ with dual $1$-form $\alpha$, an orthogonal splitting
\[
TN=\mathbb R U\oplus\mathcal E\oplus\mathcal B,\qquad \operatorname{rank}\mathcal E=\operatorname{rank}\mathcal B=n-1,
\]
and an endomorphism field $\Psi$ with $\Psi(\mathcal E)=\mathcal B$ satisfying
\[
\Psi^2=-\operatorname{Id}+\alpha\otimes U,\qquad \bar g(\Psi (V),\Psi (W))=\bar g(V,W)-\alpha(V)\alpha(W)
\]
(Proposition~\ref{thm:euc-onminusone-structure-new}); thus $(\bar g,U,\alpha,\Psi)$ is an almost contact metric structure. By reductivity, the $\mathfrak h$-part $\gamma$ of the Cartan connection is a principal connection, which induces a linear connection $\nabla^\gamma$ on $TN$ preserving $\bar g$, $U$, $\mathcal E$, $\mathcal B$ and $\Psi$ (Proposition~\ref{prop:canonical-linear-connection-induced}). On the correspondence space of a torsion-free Cartan geometry of type $(\operatorname{Euc}(n),O(n))$, Proposition~\ref{prop:unit-bundle-structures-agree-new} identifies these objects with the classical geometry of the unit tangent bundle described in Subsections~\ref{26072025} and~\ref{31052026}: $\bar g$ is the Sasaki metric, $U$ the geodesic vector field, $\alpha$ the contact form, $\mathbb R U\oplus\mathcal E$ and $\mathcal B$ the horizontal and vertical distributions, and $\Psi$ the tensor $\Phi$ determined by the almost complex structure $J$ and the Liouville normal vector field $\mathbb A$. However, Remark~\ref{rmk:gamma-not-sasaki-levi-civita} notes that $\nabla^\gamma$ cannot be the Levi-Civita connection of the Sasaki metric.
  
Subsection~\ref{subsec:inverse-construction} proves the converse. From a Cartan datum $(\bar g,U,\alpha,\mathcal E,\mathcal B,\Psi,\nabla)$ (Definition~\ref{def:adapted-cartan-datum}) one constructs a Cartan geometry of type $(\operatorname{Euc}(n),O(n-1))$. Theorem~\ref{thm:euc-onminusone-equivalence} shows this construction to be inverse to the one of Subsection~\ref{subsec:structure-induced}:
\begin{quote}
Cartan geometries of type $(\operatorname{Euc}(n),O(n-1))$ on a $(2n-1)$-manifold, up to isomorphism, correspond bijectively to Cartan datums.
\end{quote}
To conclude the subsection, Remark~\ref{rmk:recognition-correspondence-euc} gives the conditions under which a Cartan geometry of type $(\operatorname{Euc}(n),O(n-1))$ arises, locally, as a correspondence space of one of type $(\operatorname{Euc}(n),O(n))$.
  
Subsection~\ref{subsec:normal-adapted-cartan} singles out the normal Cartan connection, for $n\ge3$. The normalization procedure is controlled by the Spencer differential
\[
\partial\colon\mathfrak m^*\otimes\mathfrak h\to\Lambda^2\mathfrak m^*\otimes\mathfrak m,\qquad (\partial A)(X,Y)=[A(X),Y]-[A(Y),X],
\]
which is injective (Lemma~\ref{lem:normal-partial-injective}); equivalently, $\mathfrak h$ has trivial first prolongation. With the adjoint $\partial^*$ defined in \eqref{eq:normal-adjoint-definition}, a Cartan connection $\omega$ is called normal when its torsion $T^\omega$ satisfies $\partial^*T^\omega=0$ (Definition~\ref{def:normal-adapted-cartan}). Theorem~\ref{thm:normal-adapted-connection} then establishes:
\begin{quote}
underlying structures $(\bar g,U,\alpha,\mathcal E,\mathcal B,\Psi)$, that is, Cartan datums without their linear-connection part, correspond bijectively to Cartan geometries of type $(\operatorname{Euc}(n),O(n-1))$ with normal Cartan connection.
\end{quote}
This normalization is the one dictated by the correspondence space construction: the Cartan connection induced on the correspondence space of a torsion-free Cartan geometry of type $(\operatorname{Euc}(n),O(n))$ is normal (Proposition~\ref{prop:normal-compatible-with-riemannian-correspondence}). The linear connection determined by the normal Cartan connection is not computed here; its explicit form and geometric properties are left to future work (Remark~\ref{rmk:future-work}). Finally, Remark \ref{rmk:nilpotent-not-correspondence} presents examples of Cartan geometries of type $(\operatorname{Euc}(n),O(n-1))$ that do not arise as correspondence spaces.

\section{Preliminaries}

All manifolds are assumed to be smooth, Hausdorff, connected, and second countable. Throughout the article, we write $Tf$ for the differential of a smooth map $f$.

\subsection{Geometry of the tangent bundle}\label{26072025}

Let $(M,g)$ be a Riemannian manifold with $\dim M = n \geq 2$, Levi-Civita connection $\nabla$, and let $\pi \colon TM \to M$ denote the natural projection. The connector $c$ associated with $\nabla$ is defined by
\[
c : T(TM) \longrightarrow TM, 
\qquad 
\xi_{v} \longmapsto \left.\frac{\nabla \gamma}{dt}\right|_{t=0},
\]
where $\gamma$ is a curve in $TM$ such that $\gamma(0)=v\in T_{\pi(v)}M$ and $\dot\gamma(0)=\xi_v\in T_v(TM)$. Here, $\frac{\nabla \gamma}{dt}$ denotes the covariant derivative of the vector field $\gamma$ along the curve $\pi \circ \gamma$ in $M$. The map $c$ is well defined, and the pair $(c,\pi)$ defines a vector bundle morphism from $\pi_{TM} : T(TM) \to TM$ to $\pi : TM \to M$, where $\pi_{TM}$ denotes the natural projection. That is, the following diagram commutes:
\[
\begin{tikzcd}
T(TM) \arrow[r, "c"] \arrow[d, "\pi_{TM}"'] & TM \arrow[d, "\pi"] \\
T M \arrow[r, "\pi"'] &   M
\end{tikzcd}
\]
Recall that for every $v\in TM$, there is a natural identification
$$
(\,\,)_{v}\colon T_{\pi(v)}M\to T_{v}(T_{\pi(v)}M)\subset T_{v}(TM),\quad u\in T_{\pi(v)}M \mapsto (u)_v :=\left.\frac{d}{dt}\right|_{t=0}(v+tu).
$$
A straightforward computation shows that
$
c\big((u)_v\big) = u.
$

Using the connector $c$, we can lift the metric tensor $g$ to  $TM$ by defining
\[
\widehat{g}(\xi_v,\eta_v) 
= g\big(T_v\pi\cdot\xi_v, T_v\pi\cdot\eta_v\big) 
  + g\big(c(\xi_v), c(\eta_v)\big),
\quad \text{for all } \xi_v,\eta_v \in T_v(TM).
\]
The tensor $\widehat{g}$ is a Riemannian metric on $TM$, called the Sasaki metric induced by $g$;  see \cite[Section~4.5]{YI73} and \cite[Section~1.K]{Besse}.
The projection $\pi$ defines the vertical distribution on $TM$ by
$$
\mathcal{V}_v:=\mathrm{Ker}\,(T_{v}\pi)=T_{v}(T_{\pi(v)}M),
$$
and the Sasaki metric determines the horizontal distribution as its orthogonal complement,
$$
\mathcal{H}_v:=\mathcal{V}_v^{\perp}.
$$
Thus, we have the splitting $T(TM)=\mathcal{H}\oplus \mathcal{V} $, and the projection
$
\pi : (TM,\widehat{g}) \longrightarrow (M,g)$ is a Riemannian submersion. In particular, for every $v\in TM$,
$$
T_{v}\pi\colon \mathcal{H}_v\to T_{\pi(v)}M
$$
is an isometry. Note that $\mathcal{H}_v=\mathrm{Ker}\,(\left.c\right|_{T_{v}(TM)})$.

The connector $c$ also induces an almost complex structure $J$ on $TM$. Indeed, we have a vector bundle isomorphism
\begin{equation}\label{05062026ufhfud}
T(TM)\to TM \oplus TM, \quad \xi_v\mapsto (T_{v}\pi \cdot \xi_v , c(\xi_v))
\end{equation}
under which the almost complex structure $J\in \mathcal{T}_{(1,1)}(TM)$ corresponds to the map $(u,v)\mapsto (-v,u)$. A direct computation shows that
$$
\widehat{g}(J(\xi_v), J(\eta_v))=\widehat{g}(\xi_v, \eta_v).
$$

There are two distinguished vector fields on $TM$. The first is the Liouville vector field $\mathbb{A}\in \mathfrak{X}(TM)$, defined by $\mathbb{A}_v:=(v)_v$. This vector field is vertical and satisfies $\widehat{g}(\mathbb{A}_v, \mathbb{A}_v)=g(v,v)$ for every $v\in TM$. The second is the geodesic vector field $\mathbf{Z}_{g}\in \mathfrak{X}(TM)$, defined by
$$
\mathbf{Z}_{g}(v):= \left.\frac{d \dot\gamma_v}{dt}\right|_{t=0}, \quad v\in TM,
$$
where $\gamma_{v}\colon I \to M$ is the geodesic with initial conditions $\gamma_v(0)=\pi(v)$ and $\dot\gamma_v(0)=v$. Note that $\left.\frac{d \dot\gamma_v}{dt}\right|_{t=0}$ is the velocity of the curve $\dot\gamma_{v}\colon I \to TM$ at $t=0$. The vector field $\mathbf{Z}_g$ is horizontal and satisfies
$\widehat{g}(\mathbf{Z}_{g}(v), \mathbf{Z}_{g}(v))=g(v,v)$ for all $v\in TM$. Moreover, $J(\mathbb{A})=-\mathbf{Z}_{g}$.

\smallskip

Every vector field $V\in \mathfrak{X}(M)$ admits two natural lifts to $TM$. The horizontal lift $V^{\mathcal{H}}\in \mathfrak{X}(TM)$ is characterized by
$$
V^{\mathcal{H}}_v\in \mathcal{H}_v\quad \textrm{ and }\quad T_{v}\pi \cdot V^{\mathcal{H}}_v=V_{\pi(v)},
$$
for every $v\in TM$. The vertical lift $V^{\mathcal{V}}\in \mathfrak{X}(TM)$ is defined by
$$
V^{\mathcal{V}}_v:=\big(V_{\pi(v)}\big)_{v}.
$$
The Lie brackets of these lifted vector fields satisfy
$$
[V^{\mathcal{H}}, W^{\mathcal{H}}]=[V,W]^{\mathcal{H}}-\big(R(V,W)\big)^{\mathcal{V}},\quad [V^{\mathcal{H}}, W^{\mathcal{V}}]=\big(\nabla_{V}W\big)^{\mathcal{V}}, \quad [V^{\mathcal{V}}, W^{\mathcal{V}}]=0,
$$
for all $V,W\in \mathfrak{X}(M)$, where $R$ denotes the curvature tensor of $(M,g)$. The Levi-Civita connection $\widehat{\nabla}$ of $\widehat{g}$ is given by
$$
\widehat{\nabla}_{V^\mathcal{H}}W^\mathcal{H} = \big(\nabla_V W\big)^\mathcal{H} - \frac12 \big(R(V,W)\big)^\mathcal{V}, 
$$
$$\widehat{\nabla}_{V^\mathcal{H}}W^\mathcal{V} = \big(\nabla_V W\big)^\mathcal{V}+ \frac12 \big(R(\cdot,W)V\big)^\mathcal{H}, $$
$$\widehat{\nabla}_{V^\mathcal{V}}W^\mathcal{H} = \frac12 \big(R(\cdot,V)W\big)^\mathcal{H}, \quad \widehat{\nabla}_{V^\mathcal{V}}W^\mathcal{V} = 0.
$$
These formulas for the brackets of the lifts and the Levi-Civita connection are classical; see \cite[Section~4.5]{YI73} and \cite[Chapter~9]{Blair1}. Finally, the fibers of the Riemannian submersion $\pi : (TM, \widehat{g}) \to (M,g)$ are totally geodesic.

\subsection{Geometry of the unit tangent bundle}\label{31052026}

The unit tangent bundle of a Riemannian manifold $(M,g)$ is the subbundle of $TM$ defined by
$$
\mathcal{U}M=\{v\in TM:\ g(v,v)=1\}.
$$
That is, the restriction of the natural projection $\pi: TM \to M$ induces a fiber bundle $\pi:\mathcal{U}M \to M,$ whose standard fiber is the unit $(n-1)$-dimensional sphere $\mathbb{S}^{n-1}$ in the $n$-dimensional Euclidean space $\mathbb{E}^{n}$, defined by
$$
\mathbb{S}^{n-1}=\{x\in \mathbb{E}^{n}:\ \langle x,x\rangle=1\}.
$$

The Sasaki metric $\widehat{g}$  induces a Riemannian metric on the unit tangent bundle with unit normal vector field $\mathbb{A}$. Hence, we obtain
$$
T(\mathcal{U}M)=\{\xi\in T(TM): \widehat{g}(\xi, \mathbb{A})=0\}.
$$
In particular, the horizontal distribution $\mathcal{H}$ is tangent to $\mathcal{U}M$. Moreover, the geodesic vector field $\mathbf{Z}_g \in \mathfrak{X}(\mathcal{U}M)$ satisfies $\widehat{g}(\mathbf{Z}_{g}, \mathbf{Z}_{g})=1$ on $\mathcal{U}M.$ We also consider the vertical distribution
$$
V_v:=\mathrm{Ker}\,(T_v \pi),\quad v \in \mathcal{U}M,
$$
and we have the decomposition $T(\mathcal{U}M)= \mathcal{H}\oplus V$.  Finally, $\mathcal{U}M$ admits an almost contact metric structure
$(\widehat{g},\mathbf{Z}_g,\tau,\Phi)$ in the sense of
\cite[Chapter~4]{Blair1}, where
$$
\tau:=\widehat{g}(\mathbf{Z}_g,\cdot),
$$
and $\Phi$ is defined by the decomposition
\[
J(\xi)=\Phi(\xi)+\tau(\xi)\mathbb{A},
\qquad \xi\in T(\mathcal{U}M).
\]
Recall that, in general, an almost contact metric structure satisfies
\[
\Phi^{2}=-\operatorname{Id}+\tau\otimes\mathbf{Z}_g,
\qquad
\widehat{g}(\Phi(\xi),\Phi(\eta))
=
\widehat{g}(\xi,\eta)-\tau(\xi)\tau(\eta),
\qquad \xi,\eta\in T(\mathcal{U}M),
\]
where $\mathbf{Z}_g$ has unit length and $\tau$ is its
$\widehat{g}$-dual $1$-form. 

Moreover, on the unit tangent bundle endowed with the Sasaki metric, the
\(1\)-form \(\tau\) and the tensor \(\Phi\) defined above satisfy
\[
d\tau(\xi,\eta)
=
\widehat g\big(\xi,\Phi(\eta)\big),
\qquad
\xi,\eta\in T(\mathcal U M).
\]
This differential identity is not part of the definition of an almost contact
metric structure; it is a special feature of the unit tangent bundle with its
Sasaki metric. Precisely this identity, together with the almost contact metric
identities above, makes \((\widehat g,\mathbf Z_g,\tau,\Phi)\) a contact metric
structure. In particular, it forces \(d\tau\) to be non-degenerate on
\(\mathrm{Ker}(\tau)\), so that \(\tau\) is a contact form,
\[
\tau\wedge(d\tau)^{n-1}\neq0,
\]
with Reeb vector field \(\mathbf Z_g\); see, for instance, \cite[Chapter~9]{Blair1}.

\subsection{Cartan geometries and correspondence spaces}\label{31102024B}

For the sake of completeness, we recall several notions on Cartan geometries in a fully general context in this Section. We will focus only on the part of the theory relevant to this article and will closely follow  \cite[Section 1.5]{CS09}.

\subsubsection{Definition and properties}\label{130820295y}
 
For a principal fiber bundle $p:\mathcal{P}\to M$ with structure group $H$, let us denote by $r^{h}$ the right translation on $\mathcal{P}$ by $h \in H$, and by $\zeta_{X}\in \mathfrak{X}(\mathcal{P})$ the fundamental vector field corresponding to $X\in \mathfrak{h}=\mathrm{Lie}(H)$, given by
$
\zeta_{X}(u):=\left.\frac{d}{dt}\right|_{t=0}\left(u \cdot \mathrm{exp}\left(tX\right)\right).
$

Let $G$ be a Lie group and $H\subset G$ a closed subgroup, and let $\mathfrak{g}$ be the Lie algebra of $G$. A Cartan geometry of type $(G,H)$ on a manifold $M$ consists of:
\begin{enumerate}
\item An $H$-principal fiber bundle $p:\mathcal{P}\to M$.

\item A $\mathfrak{g}$-valued $1$-form $\omega \in \Omega^{1}(\mathcal{P}, \mathfrak{g})$, called the Cartan connection, such that for every $u\in \mathcal{P}, h\in H$, and $X\in \mathfrak{h}$, the following conditions hold:
\begin{enumerate}
\item $
\omega(u):T_{u}\mathcal{P}\to \mathfrak{g}
$
is a linear isomorphism.
\item  $(r^{h})^{*}(\omega)=\mathrm{Ad}(h^{-1})\circ \omega$.
\item $\omega(u)(\zeta_{X}(u))=X.$
 \end{enumerate} 
\end{enumerate}
For every $X\in \mathfrak{g}$, the constant vector field $\omega^{-1}(X)\in \mathfrak{X}(\mathcal{P})$ is defined by the condition $\omega(u) \left(\omega^{-1}\left(X\right)(u)\right)=X$ for all $u\in \mathcal{P}$. The curvature form of a Cartan geometry is given by $K:=d\omega+\frac{1}{2}[\omega, \omega]\in \Omega^{2}(\mathcal{P}, \mathfrak{g})$. Equivalently, we have
$$
K(\xi ,\eta)=d\omega (\xi, \eta)+[\omega(\xi), \omega (\eta)], \quad \xi, \eta\in \mathfrak{X}(\mathcal{P}). 
$$
A Cartan connection $\omega$ is said to be torsion-free when $K$
takes values in $\mathfrak{h}$.
All the information of $K$ is contained in the curvature function $\kappa : \mathcal{P}\to \Lambda^{2}\mathfrak{g}^{*}\otimes \mathfrak{g}$, defined as $$\kappa(u)(X,Y)=K(u)\left(\omega^{-1}(X)(u),\omega^{-1}(Y)(u)\right).$$ Since $K$ is horizontal, meaning that it vanishes when evaluated on a vertical tangent vector, the curvature function may be viewed as $\kappa : \mathcal{P}\to \Lambda^{2}(\mathfrak{g}/\mathfrak{h})^{*}\otimes \mathfrak{g}$, see \cite[Lemma 1.5.1]{CS09}.

The canonical projection $p:G\to G/H$, endowed with the (left) Maurer-Cartan form $\omega_{G}\in \Omega^{1}(G, \mathfrak{g})$,  is called the homogeneous model for Cartan geometries of type $(G,H)$. The Maurer-Cartan equation implies that the homogeneous model of any Cartan geometry has zero curvature \cite[Section 1.2.4]{CS09}.

A Cartan connection provides a description of the tangent bundle of the base manifold $M$ as the associated fiber bundle $\mathcal{P}\times_{{H}} (\mathfrak{g}/\mathfrak{h})$ for the quotient adjoint representation $\underline{\mathrm{Ad}}:H\longrightarrow \operatorname{GL}(\mathfrak{g}/\mathfrak{h)}$  given by
\begin{equation*}
\underline{\mathrm{Ad}}(h):\mathfrak{g}/\mathfrak{h} \to \mathfrak{g}/\mathfrak{h},\quad Y+\mathfrak{h}\mapsto \mathrm{Ad}(h)(Y)+\mathfrak{h},\end{equation*}
where $\mathrm{Ad}$ denotes the adjoint representation of $G$. Explicitly, for each $u\in \mathcal{P}$ with $p(u)=x\in M$, there is a canonical linear isomorphism 
$\phi_{u}:T_{x}M \to \mathfrak{g}/\mathfrak{h}$ such that the following diagram commutes:
\begin{equation}\label{isomor}
\begin{CD}
T_{u}\mathcal{P}  @>\omega(u)>> \mathfrak{g}\\
@V T_{u} p  VV @VV \mathrm{pr} V \\
T_{x}M @>>\phi_{u}  > \mathfrak{g}/\mathfrak{h}
\end{CD}\quad \quad \text{with } \phi_{uh}=\underline{\mathrm{Ad}}(h^{-1})\circ\phi_{u} \text{ for all }h\in H.
\end{equation}
Then, the canonical isomorphism of vector bundles over $M$ is given by
\begin{equation}\label{142}
TM \cong \mathcal{P}\times_{{H}} (\mathfrak{g}/\mathfrak{h}),\quad (x,v)\in TM\mapsto [u, \phi_{u}(v)],
\end{equation}
where $p(u)=x$, see \cite[Theorem 3.15]{Sharpe}.

The Cartan connection also determines a distinguished $\mathfrak{g}/\mathfrak{h}$-valued
$1$-form on $\mathcal{P}$, called the soldering form,
\begin{equation}\label{21072026}
\theta:=\mathrm{pr}\circ\omega\in\Omega^{1}(\mathcal{P},\mathfrak{g}/\mathfrak{h}).
\end{equation}
By the commutativity of the diagram \eqref{isomor}, the soldering form is
characterized by
$$
\theta(u)(\xi_u)=\phi_{u}\big(T_{u}p\cdot\xi_u\big),
\quad u\in\mathcal{P},\ \xi_u\in T_{u}\mathcal{P}.
$$
It is horizontal, $\theta(\zeta_{X})=0$ for every $X\in\mathfrak{h}$, and
$H$-equivariant,
$$
(r^{h})^{*}(\theta)=\underline{\mathrm{Ad}}(h^{-1})\circ\theta,\quad h\in H.
$$
When the model $G/H$ is reductive, that is, when $\mathfrak{g}$ carries an
$H$-invariant decomposition $\mathfrak{g}=\mathfrak{h}\oplus\mathfrak{m}$, the
Cartan connection splits as $\omega=\gamma+\theta$.
Its $\mathfrak{h}$-component $\gamma=\mathrm{pr}_{\mathfrak{h}}\circ\omega$
is a principal connection on $P$, while its $\mathfrak{m}$-component $\theta=\mathrm{pr}_{\mathfrak{m}}\circ\omega$ is the soldering form viewed through the natural isomorphism between $\mathfrak{m}$ and $\mathfrak{g}/\mathfrak{h}$, see \cite[Chapter 5]{Sharpe}.

An isomorphism of Cartan geometries $(p:\mathcal{P}\to M, \omega)$ and $(p':\mathcal{P}'\to M', \omega')$ with the same model $G/H$ is a principal fiber bundle isomorphism $(F,f)$ 
$$
 \begin{CD}
\mathcal{P}  @>F >> \mathcal{P}'\\
@V p VV @VV p' V \\
M @>>  f > M'
\end{CD}
$$
such that $F^{*}(\omega ')=\omega$.
That is, $F$ is a diffeomorphism from $\mathcal{P}$ to $\mathcal{P}'$ (and so is $f$) such that $F\circ r^{h}=r^{h}\circ F$ for all $h\in H$, and $F^{*}(\omega ')=\omega$. 
In the particular case that both Cartan geometries are the same, an isomorphism is called an automorphism.
The group $\mathrm{Aut}(\mathcal{P}, \omega)$ of all automorphisms of the Cartan geometry $(p:\mathcal{P}\to M, \omega)$ over a connected manifold $M$ is a Lie group (possibly with uncountably many connected
components) and has dimension at most $\mathrm{dim}(G)$, \cite[Theorem 1.5.11]{CS09}.

A Cartan geometry $p:\mathcal{P}\to M$ of type $(G,H)$ has curvature form $K=0$ if and only if every point $x\in M$ has an open neighborhood $U\subset M$ such that $\left(p:p^{-1}(U)\to U, \left.\omega\right|_{U}\right)$ is isomorphic to the restriction of the homogeneous model $\left(G\to G/H, \omega_{G}\right)$ to an open neighborhood of $o:=eH$,  \cite[Proposition 1.5.2]{CS09}.

\subsubsection{Correspondence spaces}\label{12062025}

This Subsection is based on \cite[Section 1.5.13]{CS09}. Let $(q:\mathcal{P}\to M, \omega)$ be a Cartan geometry of type $(G,P)$, and let $H\subset P$ be a closed subgroup. The correspondence space for $H\subset P$ is given by $\mathcal{C}(M):=\mathcal{P}/H$. 
The natural projection $f:\mathcal{C}(M)\to M$ defines a fiber bundle over $M$ with fiber given by the homogeneous space $P/H$,
and $(p\colon \mathcal{P}\to \mathcal{C}(M), \omega)$ is a Cartan geometry of type $(G,H)$ on $\mathcal{C}(M)$, see \cite[Proposition 1.5.13]{CS09}. The following commutative diagram holds:

$$
\begin{tikzpicture}
	
	\node (A) at (0,0) {$\mathcal{C}(M)$};
	\node (B) at (3,0) {$M$};
	\node (C) at (1.5,1.5) { $\mathcal{P}$};

	\draw[->] (A) -- node[midway, below] {$f$} (B);
	\draw[<-] (B) -- node[midway, right] {$q$} (C);
	\draw[->] (C) -- node[midway, left] {$p$} (A);
\end{tikzpicture}
$$
From (\ref{142}), we obtain a description of the tangent bundle of $\mathcal{C}(M)$ as  $\mathcal{P}\times _{{H}}(\mathfrak{g}/ \mathfrak{h})$. In these terms, the vertical distribution of $f$ corresponds to $\mathcal{P}\times _{{H}}(\mathfrak{p}/ \mathfrak{h})\subset \mathcal{P}\times _{{H}}(\mathfrak{g}/ \mathfrak{h})=T\mathcal{C}(M)$, \cite[Proposition 1.5.13]{CS09}.
The corresponding curvature function
$
k^{\mathcal{C}(M)}:\mathcal{P}\to \Lambda^{2}(\mathfrak{g}/\mathfrak{h})^{*}\otimes \mathfrak{g}
$
satisfies 
\begin{equation}\label{081224A}
  k^{\mathcal{C}(M)}(u)(X+\mathfrak{h}, \cdot)=0,   
\end{equation}
for every $X\in \mathfrak{p}$ and $u\in \mathcal{P}$.

On the other hand, the converse can also be characterized. Let $G$ be a Lie group and $H\subset P\subset G$ closed subgroups. Consider a Cartan geometry $(p:\mathcal{P}\to N, \omega)$ of type $(G,H)$ such that the distribution $\mathcal{V}(N):= \mathcal{P}\times _{H}(\mathfrak{p}/ \mathfrak{h}) \subset TN$ is integrable. 
A (local) twistor space for $N$ is a smooth manifold $M$ together with an open subset $U\subset N$ and a surjective submersion $f:U\to M$ such that $\mathcal{V}_{x}N=\mathrm{Ker}\,(T_{x}f)$, i.e., a (local) leaf space for the foliation defined by $\mathcal{V}(N),$ \cite[Definition 1.5.14]{CS09}. 

The curvature property (\ref{081224A}) locally characterizes the correspondence spaces as follows, \cite[Theorem 1.5.14]{CS09}.
Let $(p:\mathcal{P}\to N, \omega)$ be a Cartan geometry of type $(G,H)$ such that the distribution $\mathcal{V}(N)= \mathcal{P}\times_{H}(\mathfrak{p}/ \mathfrak{h}) \subset TN$ is integrable, and assume that the curvature function 
$
k^{N}:\mathcal{P}\to \Lambda^{2}(\mathfrak{g}/\mathfrak{h})^{*}\otimes \mathfrak{g}
$
satisfies $k^{N}(u)(X+\mathfrak{h}, \cdot)=0$ for every $X\in \mathfrak{p}$ and $u\in\mathcal{P}$. Then, for any sufficiently small local twistor space $f:U\to M$, there exists a Cartan geometry of type $(G,P)$ on $M$ such that the restricted Cartan geometry $\left(p:p ^{-1}(U) \to U, \left.\omega\right|_{U}\right)$ of type $(G,H)$ is isomorphic to an open subset of the correspondence space $\mathcal{C}(M)$. In the case where $P/H$ is connected, this Cartan geometry on $M$ is unique.

\section{Riemannian Geometry as a Cartan geometry}\label{22072026}
This Section is included for completeness. All the results recalled here are standard and follow Sharpe's treatment of Euclidean and Riemannian geometries \cite{Sharpe}.

Let $\mathbb{E}^{n}=(\mathbb{R}^{n},\langle\cdot,\cdot\rangle)$ be Euclidean space. We write the Euclidean group as the semidirect product
\[
\operatorname{Euc}(n)=\mathbb{R}^{n}\rtimes O(n)
=\left\{
\begin{pmatrix}
1&0\\
v&A
\end{pmatrix}: v\in\mathbb{R}^{n},\ A\in O(n)
\right\}.
\]
Its action on $\mathbb{R}^{n}$ is given by
\[
\begin{pmatrix}
1&0\\
v&A
\end{pmatrix}\cdot x=v+Ax,
\qquad x\in\mathbb{R}^{n}.
\]
The Lie algebra of $\operatorname{Euc}(n)$ is
\[
\mathfrak{euc}(n)=
\left\{
\begin{pmatrix}
0&0\\
v&B
\end{pmatrix}: v\in\mathbb{R}^{n},\ B\in\mathfrak{o}(n)
\right\}.
\]
We shall use the identification
\begin{equation}\label{eq:euc-quotient-on}
\mathfrak{euc}(n)/\mathfrak{o}(n)\cong\mathbb{R}^{n},
\qquad
\begin{pmatrix}
0&0\\
v&B
\end{pmatrix}+\mathfrak{o}(n)\longmapsto v.
\end{equation}

Recall that a Klein geometry $G/H$ is said to be of first-order if the quotient adjoint representation $\underline{\mathrm{Ad}}:H\longrightarrow \operatorname{GL}(\mathfrak{g}/\mathfrak{h)}$ is faithful.

\begin{proposition}\label{prop:euc-first-order-reductive}
The Klein geometry $\operatorname{Euc}(n)/O(n)$ is first-order and reductive. More precisely, under the identification \eqref{eq:euc-quotient-on},
\[
\underline{\mathrm{Ad}}(A)(v)=Av,
\qquad A\in O(n),\quad v\in\mathbb{R}^{n},
\]
and
\[
\mathfrak{euc}(n)=\mathfrak{o}(n)\oplus\mathfrak{t},
\qquad
\mathfrak{t}:=
\left\{
\begin{pmatrix}
0&0\\
v&0
\end{pmatrix}: v\in\mathbb{R}^{n}
\right\},
\]
with $\mathrm{Ad}(O(n))(\mathfrak{t})\subset\mathfrak{t}$.
\end{proposition}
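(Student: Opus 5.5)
The plan is to compute the adjoint action of $O(n)$ on $\mathfrak{euc}(n)$ directly by matrix conjugation inside $\operatorname{GL}(n+1,\mathbb R)$. For $h=\begin{pmatrix}1&0\\0&A\end{pmatrix}$ with $A\in O(n)$, we have $h^{-1}=\begin{pmatrix}1&0\\0&A^{-1}\end{pmatrix}$. A one-line block multiplication should then give
\[
\mathrm{Ad}(h)\begin{pmatrix}0&0\\ v&B\end{pmatrix}
=h\begin{pmatrix}0&0\\ v&B\end{pmatrix}h^{-1}
=\begin{pmatrix}0&0\\ Av&ABA^{-1}\end{pmatrix}.
\]
Since $ABA^{-1}\in\mathfrak o(n)$, this formula settles everything at once.

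For reductivity, I will set $B=0$ in the formula. This shows that $\mathrm{Ad}(h)$ maps $\mathfrak t$ into $\mathfrak t$. Moreover, $\mathfrak{euc}(n)=\mathfrak o(n)\oplus\mathfrak t$ as vector spaces, because every element splits uniquely into its $v$-part and its $B$-part.

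For the quotient representation, I will read off the class modulo $\mathfrak o(n)$ under the identification \eqref{eq:euc-quotient-on}. The class of the conjugated matrix is $Av$, so $\underline{\mathrm{Ad}}(A)(v)=Av$.

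Faithfulness then follows immediately. If $Av=v$ for all $v\in\mathbb R^n$, then $A=\operatorname{Id}$, so the kernel of $\underline{\mathrm{Ad}}$ is trivial and the Klein geometry is first-order.

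There is no real obstacle here. The only point needing care is the bookkeeping: $O(n)$ must be embedded in $\operatorname{Euc}(n)$ as the block-diagonal matrices, that is, as the stabilizer of $0$. Once that is fixed, the quotient map \eqref{eq:euc-quotient-on} is well defined precisely because $\mathfrak o(n)$ consists of the matrices with $v=0$.
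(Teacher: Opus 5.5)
Your proof is correct and is essentially the paper's approach: the paper states this proposition without proof, as a standard fact following Sharpe. Your block-matrix conjugation $\mathrm{Ad}(h)\begin{pmatrix}0&0\\ v&B\end{pmatrix}=\begin{pmatrix}0&0\\ Av&ABA^{-1}\end{pmatrix}$, followed by reading off invariance of $\mathfrak t$, the quotient action and faithfulness, is exactly the direct matrix computation the paper carries out for the analogous $O(n-1)$ model in \eqref{eq:ad-o-n-1-new} of Subsection~\ref{subsec:euc-on-minus-one-model}.
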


Let $(q:\mathcal{P}\to M,\omega)$ be a Cartan geometry of type $(\operatorname{Euc}(n),O(n))$. For each $u\in\mathcal{P}$ with $q(u)=x\in M$, the Cartan connection defines, as in \eqref{isomor}, a linear isomorphism
\[
\phi_{u}:T_{x}M\longrightarrow \mathfrak{euc}(n)/\mathfrak{o}(n)\cong\mathbb{R}^{n},
\textrm{ with }
\phi_{uA}=\underline{\mathrm{Ad}}(A^{-1})\circ\phi_{u}.
\]
Thus the Euclidean inner product on $\mathbb{E}^{n}$ induces a Riemannian metric on $M$.

\begin{proposition}\label{prop:metric-from-euc-cartan}
A Cartan geometry $(q:\mathcal{P}\to M,\omega)$ of type $(\operatorname{Euc}(n),O(n))$ canonically determines a Riemannian metric $g$ on $M$ by
$$
g_{x}(v,w):=\left\langle \phi_{u}(v),\phi_{u}(w)\right\rangle,
\qquad v,w\in T_{x}M,
$$
where $u\in\mathcal{P}$ is any point with $q(u)=x$. Moreover, after fixing the canonical orthonormal basis $(E_{1},\ldots,E_{n})$ of $\mathbb{R}^{n}$, the map
\[
\mathcal{P}\longrightarrow \mathcal{F}_{g}(M),
\qquad
u\mapsto\big(\phi_{u}^{-1}(E_{1}),\ldots,\phi_{u}^{-1}(E_{n})\big),
\]
is an isomorphism of $O(n)$-principal fiber bundles over $M$.
Here the right action on $\mathcal F_g(M)$ is the usual one:
\[
(e_1,\ldots,e_n)\cdot A
=
\left(
\sum_{i=1}^{n} e_i A_{i1},
\ldots,
\sum_{i=1}^{n}e_i A_{in}
\right),
\qquad A=(A_{ij})\in O(n).
\]
\end{proposition}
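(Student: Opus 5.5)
The plan is to check three things in turn: that $g$ does not depend on the choice of $u$, that it is a smooth Riemannian metric, and that the displayed map is an $O(n)$-equivariant diffeomorphism covering the identity of $M$.

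\emph{Well-definedness.} Fix $x\in M$ and let $u,u'\in q^{-1}(x)$. Since $\mathcal P\to M$ is a principal bundle, $u'=uA$ for a unique $A\in O(n)$. By \eqref{isomor} and Proposition~\ref{prop:euc-first-order-reductive}, $\phi_{uA}=\underline{\mathrm{Ad}}(A^{-1})\circ\phi_u=A^{-1}\phi_u$. Because $A^{-1}$ preserves $\langle\cdot,\cdot\rangle$, we get $\langle\phi_{uA}(v),\phi_{uA}(w)\rangle=\langle\phi_u(v),\phi_u(w)\rangle$, so $g_x$ is independent of $u$.

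\emph{$g$ is a smooth Riemannian metric.} Each $\phi_u$ is a linear isomorphism, so $g_x$ is the pullback of a positive definite inner product and is therefore positive definite and symmetric. For smoothness, take a local section $\sigma$ of $q$ over an open set $V\subset M$. For a vector $v\in T_xM$, choose any $\xi\in T_{\sigma(x)}\mathcal P$ with $T q\cdot\xi=v$, for instance $\xi=T\sigma\cdot v$. Then $\phi_{\sigma(x)}(v)=\theta(T\sigma\cdot v)$. Hence $g|_V=\langle\sigma^*\theta,\sigma^*\theta\rangle$, which is smooth because $\theta=\mathrm{pr}\circ\omega$ is smooth.

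\emph{Image of the map.} Write $\Phi(u)=(\phi_u^{-1}(E_1),\ldots,\phi_u^{-1}(E_n))$. By the definition of $g$, $g(\phi_u^{-1}E_i,\phi_u^{-1}E_j)=\langle E_i,E_j\rangle=\delta_{ij}$, so $\Phi(u)\in\mathcal F_g(M)_x$.

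\emph{Equivariance.} This step carries the only real bookkeeping. We have $\phi_{uA}^{-1}=\phi_u^{-1}\circ A$, so
$$\phi_{uA}^{-1}(E_j)=\phi_u^{-1}\Big(\sum_i A_{ij}E_i\Big)=\sum_i A_{ij}\,\phi_u^{-1}(E_i),$$
which is exactly the $j$-th entry of $\Phi(u)\cdot A$. The thing to watch is that $\underline{\mathrm{Ad}}(A^{-1})$ appears in the transformation rule, so that composing with inverses yields the stated right action rather than its transpose.

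\emph{Smoothness and bijectivity.} In the trivialization given by $\sigma$, $\Phi(\sigma(x)A)=\Phi(\sigma(x))\cdot A$, and $\Phi\circ\sigma$ is smooth by the argument used for $g$. Thus $\Phi$ is a smooth $O(n)$-equivariant map of principal $O(n)$-bundles covering $\mathrm{id}_M$. Such a map is automatically an isomorphism: on each fiber it is an equivariant map between $O(n)$-torsors, hence bijective, and locally it reads $(x,A)\mapsto(x,h(x)A)$ with $h$ smooth, which has the smooth inverse $(x,B)\mapsto(x,h(x)^{-1}B)$.

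Canonicity holds because no choices enter beyond $\omega$ itself.
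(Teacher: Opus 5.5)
Your proof is correct and follows the standard argument the paper relies on: the paper gives no proof of its own here but cites \cite[Proposition~3.2]{Sharpe}, and it repeats the same $O(n)$-invariance computation for independence of $u$ in the proof of Proposition~\ref{thm:euc-onminusone-structure-new}. You additionally write out the smoothness of $g$ via a local section, and the equivariance and bundle-isomorphism steps, all of which the paper leaves to that citation.
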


Following Sharpe's terminology \cite[Page 234]{Sharpe}, a Euclidean geometry on $M$ is a Cartan geometry of type $(\operatorname{Euc}(n),O(n))$, and a Riemannian geometry is a torsion-free Euclidean geometry. Proposition \ref{prop:metric-from-euc-cartan} is the metric construction given in \cite[Proposition 3.2]{Sharpe}. Conversely, if $(M,g)$ is a Riemannian manifold, there is a unique Riemannian geometry inducing $g$, see \cite[Theorem 3.5]{Sharpe}. For this unique torsion-free geometry, the Cartan connection decomposes as
\[
\omega=\gamma^{LC}+\theta,
\]
where $\gamma^{LC}\in \Omega^1 (\mathcal{P}, \mathfrak{o}(n))$ is the Levi-Civita connection form and $\theta\in \Omega^1 (\mathcal{P}, \R^n)$ is the soldering form.

\section{A correspondence space in Riemannian geometry}
Let $(q:\mathcal F_g(M)\to M,\omega)$ be a Euclidean geometry on $M$, and let $g$ be the Riemannian metric induced by it. We apply the correspondence space construction of Subsection \ref{12062025} to the closed
subgroup
\[
O(n-1)\hookrightarrow O(n),
\qquad
h\longmapsto
\begin{pmatrix}
1&0\\
0&h
\end{pmatrix},
\]
that is, to the stabilizer of the first vector $E_{1}\in\mathbb{R}^{n}$. It is well known that
\[
O(n)/O(n-1)\cong\mathbb{S}^{n-1},
\qquad
A\,O(n-1)\longmapsto A E_{1}.
\]
Then, the correspondence space $\mathcal{C}(M)=\mathcal F_g(M)/O(n-1)$ is diffeomorphic to the unit tangent bundle $\mathcal{U}M=\{v\in TM:g(v,v)=1\}$, see \cite[Example 1.5.13]{CS09} for details. Under this diffeomorphism, the natural projection $f:\mathcal{C}(M)\to M$ is the
restriction $\pi:\mathcal{U}M\to M$ of the tangent bundle projection. Thus, the Cartan connection $\omega$ induces on the unit tangent bundle $\mathcal{U}M$ of $(M,g)$ a Cartan geometry of type $(\operatorname{Euc}(n),O(n-1)).$ This leads to the central question of the rest of the article: what, in general, is a Cartan geometry of type $(\operatorname{Euc}(n),O(n-1))$?

\subsection{The model $\operatorname{Euc}(n)/O(n-1)$}\label{subsec:euc-on-minus-one-model}

From now on we fix the following notation:
\[
G:=\operatorname{Euc}(n),\qquad P:=O(n),\qquad H:=O(n-1).
\]
At the level of Lie algebras we write
\[
\mathfrak g:=\mathfrak{euc}(n),\qquad
\mathfrak p:=\mathfrak{o}(n),\qquad
\mathfrak h:=\mathfrak{o}(n-1),\qquad \mathfrak t:=\mathbb R^n,
\]
so that
\[
\mathfrak g=\mathfrak p\oplus\mathfrak t.
\]
Write a vector in $\mathbb R^n$ as $(v_0, v)$, where
$v_0\in\mathbb R$ and $ v\in\mathbb R^{n-1}$. Every class in $\mathfrak g/\mathfrak h$ has a unique representative of the form
$$
(v_0, v,w):=
\begin{pmatrix}
0&0&0\\
v_0&0&-w^t\\
 v&w&0
\end{pmatrix},
\qquad
v_0\in\mathbb R,\quad  v,w\in\mathbb R^{n-1},
$$
where the lower right entry denotes the zero $(n-1)\times(n-1)$ matrix. We define
\begin{equation}\label{05062026}
e_0:=[(1,0,0)],\qquad
e_i:=[(0,E_i,0)],\qquad
b_i:=[(0,0,E_i)]
\end{equation}
where $(E_1,\ldots,E_{n-1})$ is the canonical basis of $\mathbb R^{n-1}$.
Thus
\begin{equation}\label{eq:W-decomposition-new}
\mathfrak g/\mathfrak h=\mathbb R e_0\oplus E\oplus B,
\qquad
E:=\operatorname{span}\{e_1,\ldots,e_{n-1}\},
\qquad
B:=\operatorname{span}\{b_1,\ldots,b_{n-1}\}
.
\end{equation}

We now describe explicitly the quotient adjoint representation $\underline{\mathrm{Ad}}:H\longrightarrow \operatorname{GL}(\mathfrak g/\mathfrak h).$  For each $h\in H$, regarded as an element of $G$, a direct matrix
multiplication gives
\begin{equation}\label{eq:ad-o-n-1-new}
\underline{\mathrm{Ad}}(h)\big([(v_0,v,w)]\big)=[(v_0,hv,hw)].
\end{equation}
It follows at once that the Klein geometry $G/H$ is first-order and reductive. Indeed, if
\(\underline{\mathrm{Ad}}(h)\) is the identity on \(\mathfrak g/\mathfrak h\), then \(h\)
acts trivially on the two copies of \(\mathbb R^{n-1}\), and hence \(h=\operatorname{Id}\). Moreover,
\[
\mathfrak m:=
\left\{
(v_0, v,w):v_0\in\mathbb R,\ v,w\in\mathbb R^{n-1}
\right\}
\]
is \(H\)-invariant by \eqref{eq:ad-o-n-1-new}, and
\begin{equation}\label{01062025}
\mathfrak g=\mathfrak h\oplus\mathfrak m.
\end{equation}
We shall identify \(\mathfrak m\) with \(\mathfrak g/\mathfrak h\) by the quotient map.

On the other hand, this Klein geometry carries a natural $H$-equivariant endomorphism of $\mathfrak m$, which will be important below. Namely, using the identification of $\mathfrak m$ with $\mathfrak g/\mathfrak h$, and slightly abusing notation by denoting again by $e_0,e_i$ and $b_i$ the corresponding elements of $\mathfrak m$, this endomorphism is the linear map
\begin{equation}\label{eq:model-psi-new}
\psi:\mathfrak m\longrightarrow \mathfrak m
\end{equation}
defined by
\[
\psi(e_0)=0,
\qquad
\psi(e_i)=b_i,
\qquad
\psi(b_i)=-e_i,
\qquad i=1,\ldots,n-1.
\]
Since $H$ acts in the same way on $E$ and on $B$, and fixes $e_0$, the map
$\psi$ commutes with $\underline{\mathrm{Ad}}(h)$ for every $h\in H$.

Finally, we put on $\mathfrak m$ the Euclidean inner product $\langle\cdot,\cdot\rangle_{\mathfrak m}$ for which $$(e_0,e_1,\ldots,e_{n-1},b_1,\ldots,b_{n-1})$$ is an orthonormal basis.  Formula \eqref{eq:ad-o-n-1-new} shows that this inner product is $H$-invariant.

\subsection{Structure induced by a Cartan geometry of type $(\operatorname{Euc}(n),O(n-1))$}\label{subsec:structure-induced}

We now show that any Cartan geometry with this model canonically induces on its base manifold the same type of almost contact metric structure that appears naturally on the unit tangent bundle of a Riemannian manifold.

\begin{proposition}\label{thm:euc-onminusone-structure-new}
Let $(p:\mathcal P\to N,\omega)$ be a Cartan geometry of type
$(G,H)$. Then $\dim N=2n-1$, and $N$ carries canonically the following objects:
\[
(\bar{g},U,\alpha,\mathcal E,\mathcal B,\Psi),
\]
where
\begin{enumerate}
\item\label{it:metric} $\bar{g}$ is a Riemannian metric;
\item $U\in\mathfrak X(N)$ is a unit vector field;
\item $\alpha:=\bar{g}(U,\cdot)$;
\item $\mathcal E$ and $\mathcal B$ are distributions such that $TN$ splits orthogonally as
$$
TN= \mathbb R U\oplus\mathcal E\oplus \mathcal B,
\qquad
\operatorname{rank}\mathcal E=\operatorname{rank}\mathcal B=n-1;
$$
\item\label{it:psi} $\Psi\in\mathcal T_{(1,1)}(N)$ satisfying
\[
\Psi^2=-\operatorname{Id}+\alpha\otimes U,
\qquad
 \bar{g}\big(\Psi(V),\Psi (W)\big)=\bar{g}(V,W)-\alpha(V)\alpha(W),
\]
where $V,W\in\mathfrak{X}(N)$, and
\[
\Psi(U)=0,
\qquad
\Psi(\mathcal E)=\mathcal B,
\qquad
\Psi(\mathcal B)=\mathcal E.
\]
\end{enumerate}
In particular, $(\bar{g},U,\alpha,\Psi)$ is an almost contact metric structure.
\end{proposition}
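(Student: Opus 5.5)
The plan is to transport every $H$-invariant object on $\mathfrak m\cong\mathfrak g/\mathfrak h$ to $TN$ through the canonical isomorphism \eqref{142}, $TN\cong\mathcal P\times_H(\mathfrak g/\mathfrak h)$. First, $\dim N=\dim\mathfrak g-\dim\mathfrak h=\dim(\mathfrak g/\mathfrak h)=1+2(n-1)=2n-1$, because $\omega(u)$ is an isomorphism and $T_up$ is onto with kernel spanned by the fundamental fields $\zeta_X$, $X\in\mathfrak h$. For each $x\in N$ and each $u\in p^{-1}(x)$ we then have the linear isomorphism $\phi_u\colon T_xN\to\mathfrak g/\mathfrak h=\mathfrak m$ of diagram \eqref{isomor}, with $\phi_{uh}=\underline{\mathrm{Ad}}(h^{-1})\circ\phi_u$.

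Next I would define the objects pointwise using a chosen $u$:
\[
\bar g_x(v,w):=\langle\phi_u v,\phi_u w\rangle_{\mathfrak m},\quad U_x:=\phi_u^{-1}(e_0),\quad \mathcal E_x:=\phi_u^{-1}(E),\quad \mathcal B_x:=\phi_u^{-1}(B),\quad \Psi_x:=\phi_u^{-1}\circ\psi\circ\phi_u,
\]
and set $\alpha:=\bar g(U,\cdot)$. Independence of $u$ follows from \eqref{eq:ad-o-n-1-new} together with the equivariance of $\phi_u$. Indeed, replacing $u$ by $uh$ composes $\phi_u$ with $\underline{\mathrm{Ad}}(h^{-1})$, and this map preserves $\langle\cdot,\cdot\rangle_{\mathfrak m}$, fixes $e_0$, preserves $E$ and $B$, and commutes with $\psi$, as recorded in Subsection~\ref{subsec:euc-on-minus-one-model}. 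Smoothness would be checked with a local section $\sigma$ of $\mathcal P$. On its domain, $\phi_{\sigma(x)}(v)=\mathrm{pr}\,\omega(T\sigma\cdot v)$, so $\phi_{\sigma}$ is a smooth bundle isomorphism onto the trivial bundle with fiber $\mathfrak m$, and each object is the pullback of a constant tensor or subspace.

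It remains to verify the algebraic identities, which I would do in $\mathfrak m$ and then transport through $\phi_u$, since $\phi_u$ is an isometry by construction. In the orthonormal basis $(e_0,e_i,b_i)$ the vector $e_0$ has unit length and $\mathbb R e_0$, $E$, $B$ are mutually orthogonal, so $TN=\mathbb RU\oplus\mathcal E\oplus\mathcal B$ is an orthogonal splitting with the stated ranks. From $\psi(e_0)=0$, $\psi(e_i)=b_i$ and $\psi(b_i)=-e_i$ we get $\psi^2=-\mathrm{Id}+\langle e_0,\cdot\rangle e_0$. Moreover $\psi$ maps the orthonormal set $\{e_i,b_i\}$ to an orthonormal set and kills $e_0$, so $\langle\psi X,\psi Y\rangle=\langle X,Y\rangle-\langle e_0,X\rangle\langle e_0,Y\rangle$. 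Because $\phi_u(U)=e_0$, the functional $\langle e_0,\phi_u(\cdot)\rangle$ equals $\alpha$, and these identities become $\Psi^2=-\mathrm{Id}+\alpha\otimes U$ and the metric compatibility in \ref{it:psi}. The same computation gives $\Psi(U)=0$, $\Psi(\mathcal E)=\mathcal B$ and $\Psi(\mathcal B)=\mathcal E$.

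None of these steps is a genuine obstacle. The only point needing care is the well-definedness and smoothness argument, specifically using that $\underline{\mathrm{Ad}}(H)$ lands in the stabilizer of all the model data (the inner product, $e_0$, $E$, $B$ and $\psi$). Once that is established, the almost contact metric conclusion is just the definition recalled in Subsection~\ref{31052026}.
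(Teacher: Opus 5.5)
Your proposal is correct and follows the paper's proof essentially step by step. As in the paper, you transport the $H$-invariant model data ($\langle\cdot,\cdot\rangle_{\mathfrak m}$, $e_0$, $E$, $B$, $\psi$) through $\phi_u$, use $\phi_{uh}=\underline{\mathrm{Ad}}(h^{-1})\circ\phi_u$ for independence of $u$, use a local section for smoothness, and check the identities on the basis of $\mathfrak m$. Your explicit count $\dim N=\dim\mathfrak g-\dim\mathfrak h=2n-1$ and the formula $\phi_{\sigma(x)}(v)=\mathrm{pr}\,\omega(T\sigma\cdot v)$ only make explicit two points the paper leaves implicit.
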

\begin{proof}
Let $u\in\mathcal P$ and put $x=p(u)$. As in \eqref{isomor}, the Cartan connection gives a
linear isomorphism
\[
\phi_u:T_xN\longrightarrow \mathfrak m.
\]
Since the inner product $\langle\cdot,\cdot\rangle_{\mathfrak m}$ is $H$-invariant, the formula 
\begin{equation}\label{05062026rfd}
\bar{g}_x(v,w):=\langle \phi_u(v),\phi_u(w)\rangle_{\mathfrak m},
\quad v,w\in T_xN,
\end{equation}
is independent of the chosen $u\in p^{-1}(x)$. Indeed, replacing $u$ by $uh$ gives
\[
\langle \phi_{uh}(v),\phi_{uh}(w)\rangle_{\mathfrak m}
=
\langle \underline{\mathrm{Ad}}(h^{-1})\phi_u(v),
\underline{\mathrm{Ad}}(h^{-1})\phi_u(w)\rangle_{\mathfrak m}
=
\langle\phi_u(v),\phi_u(w)\rangle_{\mathfrak m}.
\]
By the same argument as in \cite[Proposition 3.2]{Sharpe}, we conclude that $\bar{g}$ is differentiable. Hence, $\bar{g}$ defines a Riemannian metric on $N$.

For each $h \in H$, we have that $\underline{\mathrm{Ad}}(h)(e_0) = e_0$. Thus there exists $U \in \mathfrak{X}(N)$ with $U(x) \neq 0$ for all $x\in N$ given by
$$
U(x):=\phi_u^{-1}(e_0).
$$
Note that $U(x)$ is independent of $u\in p^{-1}(x)$. Since $e_0$ has length one in $\mathfrak m$, the vector field
$U$ has length one for $\bar{g}$. We set $\alpha:=\bar{g}(U,\cdot).$

The subspaces $E$ and $B$ in \eqref{eq:W-decomposition-new} are also $H$-invariant.
Thus the formulas
$$
\mathcal E_x:=\phi_u^{-1}(E),
\qquad
\mathcal B_x:=\phi_u^{-1}(B)
$$
define two smooth distributions on $N$, independent of the point $u\in p^{-1}(x)$.
The decomposition of $\mathfrak m$ in \eqref{eq:W-decomposition-new} is orthogonal; hence
\[
T_xN=\mathbb R U(x)\oplus\mathcal E_x\oplus \mathcal B_x
\]
is an orthogonal decomposition.

Finally, since the map $\psi:\mathfrak m\to \mathfrak m$ in \eqref{eq:model-psi-new} is $H$-equivariant, the
formula
\begin{equation}\label{050694836278}
\Psi_x(v):=\phi_u^{-1}\big(\psi(\phi_u(v))\big),
\qquad v\in T_xN,
\end{equation}
is independent of the chosen $u\in p^{-1}(x)$.

Smoothness of $\mathcal E,\mathcal B$
and $\Psi$ follows by writing the formulas above in a local section. The algebraic identities are checked on the basis of $\mathfrak m$. We have
\[
\psi^2(e_0)=0,
\qquad
\psi^2(e_i)=-e_i,
\qquad
\psi^2(b_i)=-b_i.
\]
Since $\alpha$ corresponds to the dual covector of $e_0$, this is exactly
\[
\Psi^2=-\operatorname{Id}+\alpha\otimes U.
\]
Moreover, let \(X,Y\in\mathfrak m\) be written in the orthonormal basis
\((e_0,e_1,\ldots,e_{n-1},b_1,\ldots,b_{n-1})\) as
\[
X=v_0e_0+\sum_{i=1}^{n-1} v_i e_i+\sum_{i=1}^{n-1} w_i b_i,
\qquad
Y=\bar{v}_0e_0+\sum_{i=1}^{n-1} \bar v_i e_i+\sum_{i=1}^{n-1} \bar w_i b_i.
\]
Then
\[
\langle \psi (X),\psi (Y)\rangle_{\mathfrak m}
=
\sum_{i=1}^{n-1} v_i\bar v_i+\sum_{i=1}^{n-1} w_i\bar w_i
=
\langle X,Y\rangle_{\mathfrak m}-v_0\bar v_0.
\]
Transporting this identity by $\phi_u^{-1}$ gives
\[
\bar{g}(\Psi (V),\Psi (W))=\bar{g}(V,W)-\alpha(V)\alpha(W).
\]
The relations $\Psi(U)=0$,$\Psi(\mathcal E)=\mathcal B$ and $\Psi(\mathcal B)=\mathcal E$ are immediate from the definition of $\psi$.
\end{proof}

\begin{remark}\label{rmk:almost-contact-not-contact}
{\rm The objects induced in Proposition~\ref{thm:euc-onminusone-structure-new} are algebraic in nature. More precisely, the vector space \(\mathfrak m\) carries tensors compatible with the adjoint representation and satisfying the same algebraic relations, and the Cartan connection transfers these tensors pointwise to \(N\). Thus the identities \[ \Psi^2=-\operatorname{Id}+\alpha\otimes U, \qquad \bar g(\Psi(V),\Psi(W)) = \bar g(V,W)-\alpha(V)\alpha(W) \] are pointwise, or zeroth-order, conditions. By contrast, the contact metric condition \[ d\alpha(V,W)=\bar g\big(V,\Psi(W)\big) \] is a first-order condition: it involves the exterior derivative of \(\alpha\), and therefore depends on the first derivatives of the \(1\)-form \(\alpha\), not only on its pointwise value. Hence this condition is not part of the underlying pointwise structure obtained in Proposition~\ref{thm:euc-onminusone-structure-new}.}
\end{remark}

\begin{remark}\label{19062026asd}
{\rm Since the model $G/H$ is first-order, any Cartan geometry $(p:\mathcal P\to N,\omega)$ of this type determines an $H$-structure on $N$. Equivalently, $\mathcal P$ can be identified, as an $H$-principal fiber bundle over $N$, with the bundle $\mathcal{F}_{\bar{g}}(N)$ of admissible orthonormal frames; see \cite[Chapter~5]{Sharpe} and \cite[Section~1.5]{CS09}. In the present notation, this bundle is described as follows. For $x\in N$,
\[
\left(\mathcal{F}_{\bar{g}}(N)\right)_x=\left\{\left(U(x),\phi_u^{-1}(e_1),\ldots,\phi_u^{-1}(e_{n-1}),\phi_u^{-1}(b_1),\ldots,\phi_u^{-1}(b_{n-1})\right):u\in\mathcal P,\ p(u)=x\right\}.
\]
Thus the admissible frames are precisely those induced by the Cartan connection from the fixed basis of $\mathfrak m$. In particular, all admissible frames at $x$ have the same first vector, namely $U(x)$. 

Under this identification the principal right action of $H$ corresponds to the change of admissible frame induced by the action $\underline{\mathrm{Ad}}(h)$ on $\mathfrak m$ given in \eqref{eq:ad-o-n-1-new}. Since $\underline{\mathrm{Ad}}(h)$ fixes $e_0$ and acts by the same matrix $h\in H$ on $E$ and on $B$, the right action of $H$ fixes $U(x)$ and rotates the $\mathcal E$-vectors $\phi_u^{-1}(e_i)$ and the $\mathcal B$-vectors $\phi_u^{-1}(b_i)$ by this same $h$; that is, $H$ acts as $\operatorname{diag}(1,h,h)\in O(2n-1)$. Consequently $\mathcal{F}_{\bar{g}}(N)$ is a reduction of the bundle of $\bar g$-orthonormal frames of $TN$ to the closed subgroup $H\hookrightarrow O(2n-1)$ acting in this way.
}
\end{remark}

We recall the standard way in which a principal connection on a principal fiber bundle induces linear connections on associated vector bundles \cite{KobayashiNomizu}. This construction will be applied below to the tangent bundle of $N$. Let $p:\mathcal P\to N$ be an $H$-principal fiber bundle, and let $\gamma\in\Omega^{1}(\mathcal P,\mathfrak h)$ be a principal connection on $\mathcal P$. Then, for every representation $\rho:H\to \mathrm{GL}(\mathbb V)$ of the Lie group $H$ on a vector space $\mathbb V$, the connection $\gamma$ induces a linear connection $\nabla^\gamma$ on the associated vector bundle $\mathcal P\times_H\mathbb V\to N$. Recall that $\nabla^{\gamma}$ is given as follows. Let $\sigma$ be a smooth section of $\mathcal{P}\times_{H}\mathbb{V}$, and let $U\subset N$ be an open subset. Then, for every $x\in U$, we have
\begin{equation}\label{13102023A}
	\nabla^\gamma_{W_{x}}\sigma:=\left[s(x),\,W_{x}(f)+\dot\rho\left(\gamma(s(x))(T_{x}s\cdot W_{x})\right)(f(x))\right],
\end{equation}
where $W\in \mathfrak{X}(N)$ and $\sigma|_U = [s,\, f]$ for a local section $s:U\subset N\rightarrow \mathcal{P}$ and a smooth function $f\in\mathcal{C}^{\infty}(U,\mathbb{V})$. We denote by $\dot\rho\colon \mathfrak{h}\to \mathfrak{gl}(\mathbb{V})$ the derivative of the representation $\rho.$

\begin{proposition}\label{prop:canonical-linear-connection-induced}
Let \((p:\mathcal P\to N,\omega)\) be a Cartan geometry of type \((G,H)\). Consider the reductive decomposition $\mathfrak g=\mathfrak h\oplus\mathfrak m$
described in \eqref{01062025}, and let
\[
\gamma:=\operatorname{pr}_{\mathfrak h}\circ\,\omega
\]
be the corresponding principal connection on \(\mathcal P\to N\). Then \(\gamma\) induces a linear connection \(\nabla^\gamma\) on \(TN\) such that
\[
\nabla^\gamma \bar{g}=0,
\qquad
\nabla^\gamma U=0,
\qquad
\nabla^\gamma\alpha=0,
\]
\[
\nabla^\gamma\mathcal E\subset\mathcal E,
\qquad
\nabla^\gamma\mathcal B\subset\mathcal B,
\qquad
\nabla^\gamma\Psi=0.
\]
\end{proposition}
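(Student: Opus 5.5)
The plan is to realize $TN$ as the associated bundle $\mathcal P\times_H\mathfrak m$ via \eqref{142} and the identification $\mathfrak m\cong\mathfrak g/\mathfrak h$. We then define $\nabla^\gamma$ as the linear connection induced by $\gamma$ through \eqref{13102023A}, with $\rho=\underline{\mathrm{Ad}}|_{\mathfrak m}$ as in \eqref{eq:ad-o-n-1-new}. First we check that $\gamma=\mathrm{pr}_{\mathfrak h}\circ\omega$ is a principal connection. Reproduction of fundamental fields follows from condition (c) of the Cartan connection, since $\mathrm{pr}_{\mathfrak h}$ is the identity on $\mathfrak h$. Equivariance follows from (b), because the decomposition \eqref{01062025} is $\mathrm{Ad}(H)$-invariant and hence $\mathrm{pr}_{\mathfrak h}$ commutes with $\mathrm{Ad}(h)$.

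The key principle is that every tensor in the list is induced from an $H$-invariant tensor on $\mathfrak m$, and the connection induced on an associated bundle parallelizes sections coming from invariant elements. We make this concrete as follows. The identification \eqref{142} extends to tensor bundles:
\[
\mathcal T_{(r,s)}N\cong\mathcal P\times_H\bigl(\mathfrak m^{\otimes r}\otimes(\mathfrak m^*)^{\otimes s}\bigr),
\]
and the connections induced by $\gamma$ on these bundles are the tensor-product extensions of $\nabla^\gamma$. A tensor field $\tau$ obtained from a fixed $H$-invariant $\tau_0$ is represented locally as $[s,f]$ with $f\equiv\tau_0$ constant, for any local section $s$. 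Formula \eqref{13102023A} then gives
\[
\nabla^\gamma_W\tau=[s,\,0+\dot\rho(\gamma(Ts\cdot W))\tau_0].
\]
Differentiating $\rho(h)\tau_0=\tau_0$ at the identity shows $\dot\rho(X)\tau_0=0$ for $X\in\mathfrak h$, so $\nabla^\gamma\tau=0$.

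We apply this principle item by item, using the invariances established in Subsection~\ref{subsec:euc-on-minus-one-model} and in the proof of Proposition~\ref{thm:euc-onminusone-structure-new}.
\begin{itemize}
\item $\bar g$ comes from the $H$-invariant inner product $\langle\cdot,\cdot\rangle_{\mathfrak m}$, so $\nabla^\gamma\bar g=0$.
\item $U$ comes from the fixed vector $e_0$, so $\nabla^\gamma U=0$.
\item $\alpha$ comes from the fixed covector $\langle e_0,\cdot\rangle_{\mathfrak m}$, so $\nabla^\gamma\alpha=0$; this also follows from the first two items.
\item $\Psi$ comes from the $H$-equivariant endomorphism $\psi$, that is, an invariant element of $\mathfrak m\otimes\mathfrak m^*$, so $\nabla^\gamma\Psi=0$.
\item For the distributions, let $\sigma=[s,f]$ be a section of $\mathcal E$, so $f$ takes values in $E$. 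The $H$-invariance of $E$ implies $\dot\rho(\mathfrak h)E\subset E$, and $W(f)$ also lies in $E$. Hence $\nabla^\gamma_W\sigma\in\mathcal E$. The same argument with $B$ gives $\nabla^\gamma\mathcal B\subset\mathcal B$.
\end{itemize}

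The main point requiring care is the compatibility between the tensorial identifications and the connection. One must check that the isomorphism \eqref{142} matches $\phi_u$ with the class $[u,\phi_u(v)]$, so that $\bar g$, $U$ and $\Psi$, defined in Proposition~\ref{thm:euc-onminusone-structure-new} through $\phi_u$, really correspond to the constant functions $\tau_0$ in any local trivialization. One must also check that \eqref{13102023A}, applied to the representation on tensor powers, agrees with the usual Leibniz extension of $\nabla^\gamma$ to tensors. Both facts are standard for associated bundles; we would cite \cite{KobayashiNomizu} for them rather than recompute.
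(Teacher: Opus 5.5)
Your proposal is correct and is essentially the paper's argument. Both proofs identify $TN$ with $\mathcal P\times_H\mathfrak m$ and read off parallelism from the fact that $U$, $\bar g$, $\alpha$, $\Psi$, $\mathcal E$, $\mathcal B$ come from the $H$-invariant objects $e_0$, $\langle\cdot,\cdot\rangle_{\mathfrak m}$, $\langle e_0,\cdot\rangle_{\mathfrak m}$, $\psi$, $E$, $B$. The only difference is bookkeeping. The paper differentiates the equivariant functions $f_V(u)=\phi_u(V_{p(u)})$ along $\gamma$-horizontal lifts, so no $\dot\rho$-term ever appears, and it checks $\nabla^\gamma\bar g=0$ and $\nabla^\gamma\Psi=0$ directly on vector fields via $\bar g(V,W)\circ p=\langle f_V,f_W\rangle_{\mathfrak m}$ and $f_{\Psi(V)}=\psi\circ f_V$. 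This avoids the passage to tensor bundles that you delegate to the standard associated-bundle theory, whereas your local-section version makes explicit why the $\dot\rho$-term vanishes on invariant elements.
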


\begin{proof}
By $(\ref{142})$, the Cartan connection identifies \(TN\) with the associated bundle $TN\cong\mathcal P\times_H\mathfrak m.$ Equivalently, if \(V\in\mathfrak X(N)\), then \(V\) corresponds to the unique
\(\underline{\mathrm{Ad}}\)-equivariant map
$
f_V:\mathcal P\longrightarrow\mathfrak m
$
defined by
\[
f_V(u):=\phi_u(V_{p(u)}).
\]
Thus $f_V(uh)=\underline{\mathrm{Ad}}(h^{-1})(f_V(u))$, for every $h\in H$.

We now take $W\in\mathfrak X(N)$, $x\in N$, and $u\in p^{-1}(x)$. Denote by $ W^{\gamma}_u\in T_u\mathcal P$ the $\gamma$-horizontal lift of $W_x$. Specializing $\eqref{13102023A}$ to the representation $\rho=\underline{\mathrm{Ad}}$, and transporting the connection induced by $\gamma$ on $\mathcal P\times_H\mathfrak m$ to $TN$ via the identification described above, the resulting linear connection $\nabla^\gamma$ on $TN$ satisfies
\[
\phi_u\big(\nabla^\gamma_{W_x}{} V\big)
=
T_uf_V\cdot W^{\gamma}_u.
\]

It remains to prove that \(\nabla^\gamma\) preserves the objects constructed in
Proposition \ref{thm:euc-onminusone-structure-new}. First, for vector fields \(V,W\in\mathfrak{X}(N)\), we have that
$
\bar{g}(V,W)\circ p
=
\langle f_V,f_W\rangle_{\mathfrak m}.
$
Differentiating along the \(\gamma\)-horizontal lift of \(X\in\mathfrak{X}(N)\) gives
\[
X(\bar{g}(V,W))
=
\bar{g}(\nabla^\gamma_XV,W)+\bar{g}(V,\nabla^\gamma_XW).
\]
Thus $\nabla^\gamma \bar{g}=0.$

The vector field \(U\) corresponds to the constant function $f_U(u)=e_0,$ for every $u\in\mathcal{P}$. Hence
$Tf_U\cdot W^{\gamma}=0$ for every \(W\in\mathfrak{X}(N)\), and therefore $\nabla^\gamma U=0.$ Since \(\alpha=\bar{g}(U,\cdot)\), the identities \(\nabla^\gamma \bar{g}=0\) and
\(\nabla^\gamma U=0\) imply $\nabla^\gamma\alpha=0.$

Next, a vector field \(V\in\mathfrak{X}(N)\) is a section of \(\mathcal E\) if and only if $f_V(\mathcal P)\subset E.$ Because \(E\subset\mathfrak m\) is a linear subspace, we also have $Tf_V\cdot W^{\gamma}\in E,$ for every \(W\in\mathfrak{X}(N)\). Therefore $\nabla^\gamma_WV\in\Gamma(\mathcal E),$ and hence $\nabla^\gamma\mathcal E\subset\mathcal E.$ The same argument applied to the subspace \(B\subset\mathfrak m\) gives $\nabla^\gamma\mathcal B\subset\mathcal B.$

Finally, the endomorphism $\psi:\mathfrak m\to\mathfrak m$ satisfies
$
f_{\Psi (V)}=\psi\circ f_V,
$
for every $V\in\mathfrak{X}(N)$. Since \(\psi\) is linear, $Tf_{\Psi (V)}\cdot W^{\gamma}
=
\psi\big(Tf_V\cdot W^{\gamma}\big).$ Transporting this identity back to \(TN\), we obtain $$\nabla^\gamma_W(\Psi (V))
=
\Psi(\nabla^\gamma_WV),$$ for all vector fields \(V,W\in\mathfrak{X}(N)\). Hence $\nabla^\gamma\Psi=0.$
\end{proof}

We now specialize the construction of Proposition~\ref{thm:euc-onminusone-structure-new} to the correspondence space \(\mathcal U M\cong \mathcal F_g(M)/H\), and compare the resulting objects with the standard geometry of the unit tangent bundle described in Subsection~\ref{31052026}.

\begin{proposition}\label{prop:unit-bundle-structures-agree-new}
Let $(M,g)$ be a Riemannian manifold, and let $(q:\mathcal F_g(M)\to M,\omega)$ be the Riemannian geometry associated with $(M,g)$. Under the identification $\mathcal U M \cong \mathcal F_g(M)/H,$ the Cartan connection $\omega$ determines a Cartan geometry of type $(G,H)$ on $\mathcal U M$. Then the objects induced on $\mathcal U M$ by this Cartan geometry are exactly as follows:
\begin{enumerate}
\item the metric $\bar{g}$  is the restriction of the Sasaki metric $\widehat g$ to $\mathcal U M$;
\item the vector field $U$ is the geodesic vector field $\mathbf Z_g$;
\item the $1$-form $\alpha$ is the $1$-form $\tau$;
\item $\mathbb R U\oplus\mathcal E$ is the horizontal distribution $\mathcal{H}$ of $\pi:\mathcal U M\to M$;
\item $\mathcal B$ is the vertical distribution $V$ of
$\pi:\mathcal U M\to M$;
\item the tensor $\Psi$ is the tensor $\Phi$ defined by
\[
J(\xi)=\Phi(\xi)+\widehat g(\mathbf Z_g,\xi)\mathbb A,
\qquad
\xi\in T(\mathcal U M),
\]
where $J$ is the almost complex structure on $TM$ and $\mathbb A$ is the Liouville normal vector field along $\mathcal U M\subset TM$.
\end{enumerate}
Consequently, $(\bar g,U,\alpha,\Psi)$ is the contact metric structure on $\mathcal U M$ induced by the Sasaki metric, as described in
Subsection~\ref{31052026}.
\end{proposition}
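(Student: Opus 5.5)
The plan is to compute the isomorphisms $\phi_u\colon T_{[u]}\mathcal U M\to\mathfrak m$ explicitly and compare them with the splitting $T_v(TM)\cong T_{\pi(v)}M\oplus T_{\pi(v)}M$ given by $(T_v\pi,c)$ in \eqref{05062026ufhfud}. We identify $\mathcal P=\mathcal F_g(M)$ via Proposition~\ref{prop:metric-from-euc-cartan}, write $u=(u_1,\ldots,u_n)$, and use the diffeomorphism $p\colon\mathcal F_g(M)\to\mathcal U M$, $u\mapsto u_1$. The Cartan connection is $\omega=\gamma^{LC}+\theta$. Under the splitting $\mathfrak g=\mathfrak h\oplus\mathfrak m$ of \eqref{01062025}, an element of $\mathfrak m$ is $(v_0,v,w)$: here $(v_0,v)\in\mathbb R^n=\mathfrak t$ is the translation part, and $w$ is the first column (below the diagonal) of the $\mathfrak o(n)$ part, i.e.\ the $\mathfrak o(n)/\mathfrak o(n-1)$ component. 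Hence for $\xi\in T_u\mathcal F_g(M)$ we get
\[
\phi_u\big(T_up\cdot\xi\big)=\big(\theta(\xi),\ (\gamma^{LC}(\xi)_{i1})_{i=2,\ldots,n}\big).
\]

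The key step is to interpret both components in terms of the geometry of $TM$. First, $\theta(\xi)=\phi^{M}_u(T_uq\cdot\xi)$ gives the coordinates of $T\pi(T_up\cdot\xi)$ in the frame $u$, because $\pi\circ p=q$. Second, take a curve $u(t)$ with $\dot u(0)=\xi$. The columns of $\gamma^{LC}(\xi)$ are the frame coordinates of the covariant derivatives $\frac{\nabla u_j}{dt}(0)$. Applied to $j=1$, this says that the first column is the coordinate vector of $c(T_up\cdot\xi)$ in the frame $u$. Its first entry vanishes by skew-symmetry, which reflects $c(T\mathcal U M)\perp v$. So, writing $\eta=T_up\cdot\xi$ at $v=u_1$,
\[
\phi_u(\eta)=\big(g(T\pi\,\eta,u_1),\ (g(T\pi\,\eta,u_i))_{i\ge2},\ (g(c\,\eta,u_i))_{i\ge2}\big).
\]
This identity is the main obstacle. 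The delicate parts are the sign and index conventions relating the matrix entries $\gamma^{LC}_{i1}$ to $\nabla u_1$, and the matrix form $(v_0,v,w)$ with its $-w^t$ entry. We would settle them by checking the identity directly on the homogeneous model.

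With this formula the remaining items follow by reading off the components. Since $\phi_u$ is an isometry onto $\mathfrak m$ with its orthonormal basis, and $(u_1,\ldots,u_n)$ is $g$-orthonormal, we obtain $\bar g(\eta,\eta)=|T\pi\,\eta|^2+|c\,\eta|^2=\widehat g(\eta,\eta)$. This proves item~1.

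The distribution $\mathcal B$ consists of vectors with $T\pi\,\eta=0$, which is $V$; this is item~5. The sum $\mathbb R U\oplus\mathcal E$ consists of vectors with $c\,\eta=0$, which is $\mathcal H$; this is item~4.

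The vector $U$ is the horizontal vector with $T\pi\,U=u_1=v$, which is $\mathbf Z_g(v)$ because the geodesic lift satisfies $T\pi\,\mathbf Z_g(v)=v$ and $c(\mathbf Z_g)=0$. This is item~2, and item~3 follows immediately.

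For item~6, $\psi$ sends $(v_0,v,w)$ to $(0,-w,v)$. We compare this with $J$, which acts by $(a,b)\mapsto(-b,a)$ under \eqref{05062026ufhfud}. The horizontal part of $J\eta$ is $-c\,\eta$, which lies in $\operatorname{span}\{u_2,\ldots,u_n\}$, matching the first two components of $\psi$. The vertical part of $J\eta$ is $T\pi\,\eta$. Its $u_1$-component is $\tau(\eta)$, which multiplies $(u_1)_v=\mathbb A_v$, and the rest matches $v$ in the $B$-slot. Hence $J\eta=\Psi\eta+\tau(\eta)\mathbb A$, which gives $\Psi=\Phi$.

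Finally, the statement that this is the contact metric structure follows from Subsection~\ref{31052026}.
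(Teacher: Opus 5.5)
Your proposal is correct and follows essentially the same route as the paper. Both arguments compute $\phi_u$ on $T(\mathcal U M)$ by reading the $\mathfrak t$-part of $\omega=\gamma^{LC}+\theta$ as the frame coordinates of $T\pi\cdot\eta$ and the $\mathfrak o(n)/\mathfrak o(n-1)$-part as the frame coordinates of the connector $c(\eta)$, and then read off items 1--6 from the splitting $(T\pi,c)$ and $J(a,b)=(-b,a)$. Your proposed convention check on the model also works: there $du_j=\sum_i u_i\,(A^{-1}dA)_{ij}$, so the first column of the $\mathfrak o(n)$-block is $+c(\eta)$, which is the sign that gives $\Psi=\Phi$ rather than $-\Phi$.
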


\begin{proof}
Recall that the Cartan connection on $\mathcal F_g(M)$ is
$\omega=\gamma^{LC}+\theta$, with $\gamma^{LC}$ the Levi-Civita connection form (valued in $\mathfrak p=\mathfrak o(n)$) and $\theta$ the soldering form (valued in $\mathfrak t=\mathbb R^n$).

Take $v\in\mathcal U M$, put $x=\pi(v)$, and choose a $g_x$-orthonormal basis
$\nu=(u_0,\dots,u_{n-1})\in\big(\mathcal F_g(M)\big)_x$ with $u_0=v$; its class modulo $H$ is
the point $v$. 
Let $\xi_v\in T_v(\mathcal U M)$, and choose a curve $v(t)\in\mathcal U M$ with $v(0)=v$ and
$\dot v(0)=\xi_v$, extended to a smooth $g$-orthonormal frame $\nu(t)=(u_0(t),\dots,u_{n-1}(t))$ with
$\nu(0)=\nu$ and $u_0(t)=v(t)$. Then $\phi_\nu(\xi_v)$ is the class of $\omega(\dot\nu(0))$ in
$\mathfrak m$, since $\dot\nu(0)$ projects to $\xi_v$ under $\mathcal F_g(M)\to\mathcal U M$. Here $\omega$ is the same Cartan connection as for $(M,g)$;
only the quotient changes, modulo $\mathfrak h$ instead of modulo $\mathfrak p$.

The $\mathfrak t$-part of $\phi_\nu(\xi_v)$ is $\theta(\nu)(\dot\nu(0))=\nu^{-1}(\dot x(0))$, where $x(t)=\pi(v(t))$, so there are unique scalars
$v_0,v_1,\dots,v_{n-1}$ with
\begin{equation}\label{eq:Tpi}
T_v\pi\cdot\xi_v=\dot x(0)=v_0u_0+\sum_{i=1}^{n-1}v_iu_i.
\end{equation}
Its $\mathfrak p$-part, modulo $\mathfrak h$, is the connector
$c(\xi_v)=\left.\frac{\nabla v}{dt}\right|_{t=0}$. Since $u_0(t)$ is unit, $c(\xi_v)$ is orthogonal to $u_0$, so there are unique scalars $w_1,\dots,w_{n-1}$ with
\begin{equation}\label{eq:connector}
c(\xi_v)=\sum_{i=1}^{n-1}w_iu_i.
\end{equation}
This gives
\begin{equation}\label{eq:phi-coords}
\phi_\nu(\xi_v)=v_0e_0+\sum_{i=1}^{n-1}v_ie_i+\sum_{i=1}^{n-1}w_ib_i,
\end{equation}
where $(e_0,e_1,\dots,e_{n-1},b_1,\dots,b_{n-1})$ is the basis of $\mathfrak m$ introduced in $(\ref{05062026})$.

As a consequence of Definition~\eqref{05062026rfd} and Equation~\eqref{eq:phi-coords}, for
\(\xi_v,\bar{\xi}_v\in T_v(\mathcal U M)\), we have
\[
\bar g(\xi_v,\bar{\xi}_v)
=
v_0\bar v_0+\sum_{i=1}^{n-1} v_i\bar v_i+\sum_{i=1}^{n-1} w_i\bar w_i.
\]
On the other hand, by the definition of the Sasaki metric and by
\eqref{eq:Tpi} and \eqref{eq:connector},
\[
\widehat g(\xi_v,\bar{\xi}_v)
=
g(T_v\pi\cdot\xi_v,T_v\pi\cdot\bar{\xi}_v)
+
g(c(\xi_v),c(\bar{\xi}_v))
=
v_0\bar v_0+\sum_{i=1}^{n-1} v_i\bar v_i+\sum_{i=1}^{n-1} w_i\bar w_i.
\]
Hence, \(\bar g=\widehat g|_{\mathcal U M}\).

By definition, \(U(v)=\phi_\nu^{-1}(e_0)\); in coordinates, this is the vector with
\(v_0=1\), \(v_i=0\), and \(w_i=0\). Equations~\eqref{eq:Tpi} and
\eqref{eq:connector} then give 
\[
T_v\pi\cdot U(v)=v,
\qquad
c(U(v))=0,
\]
which
characterizes the geodesic vector field $\mathbf{Z}_g(v)$. Hence
\(U=\mathbf Z_g\), and
\[
\alpha=\bar g(U,\cdot)=\widehat g(\mathbf Z_g,\cdot)=\tau.
\]

From \eqref{eq:Tpi}, \(\phi_\nu(\xi_v)\in B\), that is, \(v_0=0\) and
\(v_i=0\), precisely when \(T_v\pi\cdot\xi_v=0\). Thus \(\mathcal B=V\).
Similarly, \eqref{eq:connector} shows that
\(\phi_\nu(\xi_v)\in\mathbb R e_0\oplus E\), that is, \(w_i=0\), precisely
when \(c(\xi_v)=0\). Hence $\mathbb R U\oplus\mathcal E=\mathcal H$.

Under the connector identification \(T(TM)\cong TM\oplus TM\) introduced in $(\ref{05062026ufhfud})$, the Sasaki almost
complex structure is given by \(J(x,y)=(-y,x)\). Applying this to the pair
\((T_v\pi\cdot\xi_v,\,c(\xi_v))\), and using \eqref{eq:Tpi} and
\eqref{eq:connector}, the vector $J_v(\xi_v)$ corresponds to
\[
\big(-c(\xi_v),\,T_v\pi\cdot\xi_v\big)
=\left(-\sum_{i=1}^{n-1}w_iu_i,\, v_0u_0+\sum_{i=1}^{n-1}v_iu_i\right).
\]
Recall that the Liouville vector field \(\mathbb A_v\) corresponds to \((0,v)\),
whereas, from Definition $(\ref{050694836278})$, \(\Psi_v(\xi_v)\) corresponds to $$\left(-\sum_{i=1}^{n-1} w_iu_i,\sum_{i=1}^{n-1} v_iu_i\right).$$ Since $v_0=\widehat g_v(\mathbf Z_g(v),\xi_v)$, we obtain
\[
J_v(\xi_v)=\Psi_v(\xi_v)+\widehat g_v(\mathbf Z_g(v),\xi_v)\mathbb A_v,
\]
and therefore $\Psi=\Phi$.
\end{proof}

\begin{remark}\label{rmk:gamma-not-sasaki-levi-civita}
{\rm
In the correspondence space considered in Proposition~\ref{prop:unit-bundle-structures-agree-new}, the linear connection \(\nabla^\gamma\) of Proposition~\ref{prop:canonical-linear-connection-induced} is not the Levi-Civita connection \(\widehat\nabla\) of the Sasaki metric
\(\bar g=\widehat g|_{\mathcal U M}\). Indeed, by
Proposition~\ref{prop:canonical-linear-connection-induced} one has $\nabla^\gamma\alpha=0.$ By contrast, if \(\alpha\) were parallel with respect to \(\widehat\nabla\), then the torsion-freeness of \(\widehat\nabla\) would imply that \(d\alpha=0\). This is impossible in the present case, since \(\alpha=\tau\), where \(\tau\) is the contact form on
\(\mathcal U M\) described in Subsection~\ref{31052026}. Hence
\(\widehat\nabla\alpha\neq0\), and consequently \(\nabla^\gamma\neq\widehat\nabla\). In particular, the connection
\(\nabla^\gamma\) cannot be torsion-free.
}
\end{remark}

\subsection{The inverse construction and the equivalence}\label{subsec:inverse-construction}
We now turn to the converse problem: to describe Cartan geometries of type
\((G,H)\) in terms of data on \(N\). With respect to the reductive
decomposition \(\mathfrak g=\mathfrak h\oplus\mathfrak m\), a Cartan connection of this
type splits as
\[
\omega=\gamma+\vartheta,\qquad
\gamma:=\operatorname{pr}_{\mathfrak h}\circ\,\omega,\qquad
\vartheta:=\operatorname{pr}_{\mathfrak m}\circ\,\omega,
\]
where \(\gamma\) is an \(H\)-principal connection and \(\vartheta\) is the soldering
form. Proposition~\ref{thm:euc-onminusone-structure-new} shows that the
\(\mathfrak m\)-part \(\vartheta\) gives rise to the underlying structure $(\bar g,U,\alpha,\mathcal E,\mathcal B,\Psi).$
For \(n=2\), the algebra \(\mathfrak h=\mathfrak o(1)\) is zero, and hence there is no
principal-connection part in the decomposition above.  However, for \(n\geq3\), \(\mathfrak h=\mathfrak o(n-1)\neq0\) and the \(\mathfrak h\)-part \(\gamma\) has to be
accounted for. By Proposition~\ref{prop:canonical-linear-connection-induced}, the connection
\(\gamma\) induces a linear connection \(\nabla^\gamma\) on \(TN\) preserving the
underlying structure. Thus the natural data to retain on \(N\) consist of the underlying structure
together with the linear connection preserving it. This motivates the following definition; the fact that these data are exactly equivalent to Cartan geometries of
type \((G,H)\) is the content of Theorem~\ref{thm:euc-onminusone-equivalence}.

\begin{definition}\label{def:adapted-cartan-datum}
Let $N$ be a smooth manifold of dimension $2n-1$. A Cartan datum on $N$ is a tuple
\[
(\bar g,U,\alpha,\mathcal E,\mathcal B,\Psi,\nabla)
\]
such that $(\bar g,U,\alpha,\mathcal E,\mathcal B,\Psi)$ satisfies properties \textit{\ref{it:metric}}--\textit{\ref{it:psi}} of Proposition~\ref{thm:euc-onminusone-structure-new}, and $\nabla$ is a linear connection on
$TN$ satisfying
\[
\nabla \bar g=0,\qquad \nabla U=0,\qquad \nabla\alpha=0,
\]
\[
\nabla\mathcal E\subset\mathcal E,\qquad \nabla\mathcal B\subset\mathcal B,\qquad \nabla\Psi=0.
\]
\end{definition}

\begin{remark}\label{rmk:adapted-datum-redundancies}
{\rm Several of the conditions in Definition~\ref{def:adapted-cartan-datum} follow from the remaining ones and may be omitted without changing the notion of Cartan datum.
\begin{enumerate}
\item The $1$-form $\alpha=\bar g(U,\cdot)$ is determined by $\bar g$ and $U$.

\item The relation $\Psi(U)=0$ follows from $\Psi^{2}=-\operatorname{Id}+\alpha\otimes U$ and
$\alpha(U)=\bar g(U,U)=1$. Indeed, evaluating the identity at $U$ gives
$\Psi^{2}(U)=-U+\alpha(U)U=0$. Evaluating it instead at $\Psi(U)$ gives
$\Psi^{2}(\Psi(U))=-\Psi(U)+\alpha(\Psi(U))U$; the left-hand side equals
$\Psi(\Psi^{2}(U))=0$, so $\Psi(U)=\alpha(\Psi(U))U$. Applying $\Psi$ to this last identity
yields $0=\alpha(\Psi(U))\Psi(U)=\alpha(\Psi(U))^{2}U$, and since $U\neq0$ we get
$\alpha(\Psi(U))=0$, i.e., $\Psi(U)=0$.

\item Since $\mathcal E\oplus\mathcal B=\operatorname{Ker}(\alpha)$ and $\Psi^{2}=-\operatorname{Id}$
on $\operatorname{Ker}(\alpha)$, the two relations $\Psi(\mathcal E)=\mathcal B$ and
$\Psi(\mathcal B)=\mathcal E$ are equivalent: if $\Psi(\mathcal E)=\mathcal B$ then
$\Psi(\mathcal B)=\Psi^{2}(\mathcal E)=\mathcal E$, and
symmetrically for the converse. Hence only one of them need be assumed.

\item For the connection, $\nabla\alpha=0$ follows from $\nabla\bar g=0$ and $\nabla U=0$.

\item Once $\nabla\Psi=0$ holds, the inclusions $\nabla\mathcal E\subset\mathcal E$ and
$\nabla\mathcal B\subset\mathcal B$ are equivalent. Indeed, if $V\in\Gamma(\mathcal B)$ then
$V=\Psi (W)$ with $W\in\Gamma(\mathcal E)$, and $\nabla_{X}V=\Psi(\nabla_{X}W)\in\Psi(\mathcal
E)=\mathcal B$ whenever $\nabla_{X}W\in\mathcal E$; the reverse implication is symmetric. Thus only
one of the two inclusions need be assumed.
\end{enumerate}

Discarding the redundant conditions listed above, Definition~\ref{def:adapted-cartan-datum}
takes the following economical form, the omitted conditions then being automatic: a datum
consists of a Riemannian metric $\bar g$, a unit vector field $U$ (so that $\alpha=\bar g(U,\cdot)$),
an orthogonal splitting $TN=\mathbb R U\oplus\mathcal E\oplus\mathcal B$ with
$\operatorname{rank}\mathcal E=\operatorname{rank}\mathcal B=n-1$, and an endomorphism $\Psi$ with
$\Psi^{2}=-\operatorname{Id}+\alpha\otimes U$,
$\bar g(\Psi(\cdot),\Psi(\cdot))=\bar g(\cdot,\cdot)-\alpha\otimes\alpha$ and
$\Psi(\mathcal E)=\mathcal B$, together with a linear connection $\nabla$ such that
$\nabla\bar g=0$, $\nabla U=0$, $\nabla\mathcal E\subset\mathcal E$ and $\nabla\Psi=0$.}
\end{remark}

\begin{theorem}\label{thm:euc-onminusone-equivalence}
Let $N$ be a smooth manifold of dimension $2n-1$. Cartan geometries of type
$(G,H)$ on $N$, up to isomorphism, are in natural bijection with Cartan datums on $N$. More precisely:
\begin{enumerate}
\item from a Cartan geometry $(p:\mathcal P\to N,\omega)$ of type
$(G,H)$ one obtains the underlying structure
$(\bar g,U,\alpha,\mathcal E,\mathcal B,\Psi)$ of
Proposition~\ref{thm:euc-onminusone-structure-new}, together with the linear connection
$\nabla^{\gamma}$ of Proposition~\ref{prop:canonical-linear-connection-induced};
\item conversely, from a Cartan datum
$\mathcal S:=(\bar g,U,\alpha,\mathcal E,\mathcal B,\Psi,\nabla)$ one constructs canonically a
Cartan geometry $(p_{\mathcal S}:\mathcal P_{\mathcal S}\to N,\omega_{\mathcal S})$ of type
$(G,H)$;
\item these two constructions are mutually inverse, up to canonical isomorphism of $H$-principal fiber bundles.
\end{enumerate}
\end{theorem}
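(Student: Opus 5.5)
Part (1) is already contained in Propositions~\ref{thm:euc-onminusone-structure-new} and~\ref{prop:canonical-linear-connection-induced}, so the work lies in the inverse construction and in checking that the two constructions invert each other.

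\emph{Inverse construction.} Given a Cartan datum $\mathcal S$, I define $\mathcal P_{\mathcal S}\to N$ as the bundle of admissible frames. Its fiber over $x$ is the set of linear isomorphisms $\phi:T_xN\to\mathfrak m$ such that
\begin{itemize}
\item[(i)] $\phi$ is an isometry from $\bar g_x$ to $\langle\cdot,\cdot\rangle_{\mathfrak m}$;
\item[(ii)] $\phi(U(x))=e_0$, $\phi(\mathcal E_x)=E$ and $\phi(\mathcal B_x)=B$;
\item[(iii)] $\phi\circ\Psi_x=\psi\circ\phi$.
\end{itemize}
The group $H$ acts on the right by $\phi\cdot h:=\underline{\mathrm{Ad}}(h^{-1})\circ\phi$.

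Write $\phi|_{\mathcal E_x}$ as an isometry onto $\mathbb R^{n-1}$. Then (iii) forces the $\mathcal B$-part of $\phi$ to be $\psi\circ\phi|_{\mathcal E_x}\circ\Psi_x^{-1}|_{\mathcal B_x}$. Here I use $\Psi(\mathcal E)=\mathcal B$, and $\Psi$ is isometric on $\ker\alpha$. Consequently the fiber is an $O(n-1)$-torsor, and the action is free and transitive by \eqref{eq:ad-o-n-1-new}.

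For local triviality, I start from a local $\bar g$-orthonormal frame $(e'_1,\dots,e'_{n-1})$ of $\mathcal E$, obtained by Gram--Schmidt. Together with $U$ and $\Psi(e'_i)$ it gives a local section. This makes $\mathcal P_{\mathcal S}$ a reduction of the frame bundle, exactly as in Remark~\ref{19062026asd}.

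\emph{The Cartan connection.} On the frame bundle of $N$ I use the tautological soldering form $\vartheta(\xi_\phi):=\phi(Tp\cdot\xi_\phi)\in\mathfrak m$. Because $\nabla$ preserves $\bar g$, $U$, $\mathcal E$, $\mathcal B$ and $\Psi$, its parallel transport maps admissible frames to admissible frames. Hence $\nabla$ reduces to a principal $H$-connection $\gamma$ on $\mathcal P_{\mathcal S}$, and $\gamma$ is $\mathfrak h$-valued. Concretely, in a local section $s$, the connection form of $\nabla$ in the frame $s$ takes values in $\mathfrak h\subset\mathfrak{gl}(\mathfrak m)$, embedded via $\dot{\underline{\mathrm{Ad}}}$, and $\mathfrak h$ is faithfully represented there.

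I then set $\omega_{\mathcal S}:=\gamma+\vartheta$ and verify the axioms.
\begin{itemize}
\item[(a)] $\omega_{\mathcal S}$ is a pointwise isomorphism: $\gamma$ is an isomorphism on vertical vectors, $\vartheta$ is an isomorphism on a horizontal complement, and $\dim\mathcal P_{\mathcal S}=\dim\mathfrak h+2n-1=\dim\mathfrak g$.
\item[(b)] $\omega_{\mathcal S}$ is equivariant, since both $\gamma$ and $\vartheta$ are $H$-equivariant and the decomposition is $\mathfrak g=\mathfrak h\oplus\mathfrak m$ with $\mathfrak m$ an $H$-invariant complement.
\item[(c)] $\omega_{\mathcal S}$ reproduces fundamental fields, because $\vartheta(\zeta_X)=0$ and $\gamma(\zeta_X)=X$.
\end{itemize}

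\emph{Mutual inverses.} Starting from $\mathcal S$, the $\phi_u$ of \eqref{isomor} for $\omega_{\mathcal S}$ is $u$ itself, because $\vartheta$ is tautological. So the structures $(\bar g,U,\mathcal E,\mathcal B,\Psi)$ recovered by Proposition~\ref{thm:euc-onminusone-structure-new} agree with the original ones by conditions (i)--(iii). The induced $\nabla^\gamma$ equals $\nabla$ by comparing \eqref{13102023A} with the definition of $\gamma$ as the reduction of $\nabla$.

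Conversely, starting from $(\mathcal P,\omega)$, I define $F:\mathcal P\to\mathcal P_{\mathcal S}$ by $u\mapsto\phi_u$. This is $H$-equivariant by \eqref{isomor}, and it lands in admissible frames by construction. $F$ is injective because the model is first order. A point worth recording is that $\phi_u$ determines $u$ within its fiber, since $\phi_{uh}=\underline{\mathrm{Ad}}(h^{-1})\phi_u$ and $\underline{\mathrm{Ad}}$ is faithful. $F$ is fiberwise surjective because both fibers are $H$-torsors, so $F$ is a bundle isomorphism. Then $F^*\vartheta=\operatorname{pr}_{\mathfrak m}\circ\omega$ by the characterization of the soldering form. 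Also $F^*\gamma_{\mathcal S}=\operatorname{pr}_{\mathfrak h}\circ\omega$, because both connections induce the same linear connection $\nabla^\gamma$ on $TN\cong\mathcal P\times_H\mathfrak m$, and the representation $\dot{\underline{\mathrm{Ad}}}$ of $\mathfrak h$ is faithful.

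Isomorphic Cartan geometries give the same datum, because the datum is defined from the $\phi_u$, which are preserved by $F^*\omega'=\omega$. This completes the bijection on isomorphism classes.

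The main obstacle I expect is this last identification step: showing that a principal connection on $\mathcal P$ is uniquely determined by the linear connection it induces on $\mathcal P\times_H\mathfrak m$. I would handle it with local sections. In a local section $s$, the induced connection form is $\dot{\underline{\mathrm{Ad}}}\circ(s^*\gamma)$ by \eqref{13102023A}, and $\dot{\underline{\mathrm{Ad}}}$ is injective because it is the standard representation on $E$. Hence $s^*\gamma$ is determined by $\nabla^\gamma$, and by equivariance so is $\gamma$.
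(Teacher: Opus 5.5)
Your proposal is correct and follows essentially the paper's route: adapted coframes $\phi=I_b^{-1}$ in place of the paper's frames $b$, the tautological soldering form, $\gamma_{\mathcal S}$ as the reduction of the frame-bundle connection of $\nabla$, and $F(u)=\phi_u$ to compare the two composites. The only divergence is that you get $F^*\gamma_{\mathcal S}=\gamma$ from injectivity of $\dot{\underline{\mathrm{Ad}}}$ on $\mathfrak h$ (two principal connections inducing the same linear connection on $\mathcal P\times_H\mathfrak m$ coincide), whereas the paper shows that $F$ carries $\gamma$-horizontal curves to $\nabla^{\gamma}$-parallel frames; you also spell out the Cartan-connection axioms, local triviality, and well-definedness on isomorphism classes, which the paper leaves implicit.
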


\begin{proof}

We first construct the Cartan geometry associated with a Cartan datum $\mathcal S$. Motivated by the \(H\)-structure described in Remark \ref{19062026asd}, we define \(\mathcal P_{\mathcal S}\) fiberwise as follows. For \(x\in N\), let \((\mathcal P_{\mathcal S})_x\) be the set of all bases
\[
b=\big(U(x),\varepsilon_{1},\dots,\varepsilon_{n-1},k_{1},\dots,k_{n-1}\big),
\]
where $(\varepsilon_{1},\dots,\varepsilon_{n-1})$ is a $\bar g$-orthonormal basis of $\mathcal
E_{x}$ and $k_{i}:=\Psi_x(\varepsilon_{i}),$ for $i=1,\dots,n-1.$ Because $\Psi(\mathcal E)=\mathcal B$, the vectors $k_{i}$ belong to $\mathcal B_{x}$; and since
$\mathcal E_{x}\subset\mathrm{Ker}\,(\alpha_{x})$ and
$\bar g(\Psi (V),\Psi (W))=\bar g(V,W)-\alpha(V)\alpha(W)$, the family $(k_{1},\dots,k_{n-1})$ is a
$\bar g$-orthonormal basis of $\mathcal B_{x}$. Together with the unit vector $U(x)$, orthogonal
to $\mathcal E_{x}\oplus\mathcal B_{x}$, the tuple $b$ is thus a $\bar g$-orthonormal basis of
$T_{x}N$ adapted to the orthogonal splitting
$T_{x}N=\mathbb R U(x)\oplus\mathcal E_{x}\oplus\mathcal B_{x}$. 

The group $H$ acts on the right by
\[
bh=\big(U(x),\varepsilon'_{1},\dots,\varepsilon'_{n-1},k'_{1},\dots,k'_{n-1}\big),
\qquad
\varepsilon'_{i}=\sum_{j=1}^{n-1}\varepsilon_{j}h_{ji},
\quad
k'_{i}=\sum_{j=1}^{n-1}k_{j}h_{ji}.
\]
Since $h\in H$, $(\varepsilon'_{1},\dots,\varepsilon'_{n-1})$ is again a
$\bar g$-orthonormal basis of $\mathcal E_{x}$, and
$k'_{i}=\sum_{j}\Psi_x(\varepsilon_{j})h_{ji}=\Psi_x(\varepsilon'_{i})$, so $b h$
is again a basis of the prescribed form. The action is free and transitive on each fiber, since two such bases differ by the unique $h\in H$ carrying one orthonormal basis of $\mathcal E_{x}$ to
the other. Hence $\mathcal P_{\mathcal S}$ is the reduction of the bundle of
$\bar g$-orthonormal frames of $TN$ to the closed subgroup $H\hookrightarrow
O(2n-1)$ that fixes $U$ and acts by the same $h$ on $\mathcal E$ and on
$\mathcal B$; in particular,
$p_{\mathcal S}:\mathcal P_{\mathcal S}\to N$ is a smooth $H$-principal fiber bundle. 

Let $I_{b}:\mathfrak m\rightarrow T_{x}N$ be the linear isomorphism carrying the basis $(e_0,e_1,\ldots,e_{n-1},b_1,\ldots,b_{n-1})$ of $\mathfrak m$ onto the basis $b$. We define
\[
\vartheta_{\mathcal S}\in\Omega^{1}(\mathcal P_{\mathcal S},\mathfrak m),
\qquad
\vartheta_{\mathcal S}(b)(\xi_b):=I_{b}^{-1}\big(T_{b}p_{\mathcal S}\cdot\xi_b\big),
\qquad
\xi_b\in T_{b}\mathcal P_{\mathcal S}.
\]
A direct computation gives
$I_{bh}=I_{b}\circ\underline{\mathrm{Ad}}(h)$, whence
$I_{bh}^{-1}=\underline{\mathrm{Ad}}(h^{-1})\circ I_{b}^{-1}$. Together with
$p_{\mathcal S}\circ r^{h}=p_{\mathcal S}$, this yields the equivariance
\[
(r^{h})^{*}(\vartheta_{\mathcal S})=\underline{\mathrm{Ad}}(h^{-1})\circ\vartheta_{\mathcal S},\qquad h\in H.
\]
Moreover $\vartheta_{\mathcal S}$ is horizontal, since $T_{b}p_{\mathcal S}\cdot\xi_b=0$ for every vertical $\xi_b\in T_{b}\mathcal P_{\mathcal S}$. Up to this point in the construction only the data \((\bar g,U,\alpha,\mathcal E,\mathcal B,\Psi)\) have been used.

Let $\nabla$ be the linear connection of the Cartan datum $\mathcal S$. Recall that a linear connection on $TN$ is the same datum as a principal connection on the frame bundle of $TN$. Explicitly, fix a basis $f=(f_{1},\dots,f_{2n-1})$ of $T_{x}N$ and let $f(t)$ be a curve of bases such that $f(0)=f$, lying over a curve $x(t)$ in $N$ with $x(0)=x$, and set $\xi:=\dot f(0)$. The induced connection $1$-form $\varpi$ has value $\varpi(f)(\xi)=\big(\varpi_{ij}(f)(\xi)\big)\in\mathfrak{gl}(2n-1)$ determined by
\begin{equation}\label{eq:induced-connection-form}
\left.\frac{\nabla f_{j}}{dt}\right|_{t=0}=\sum_{i=1}^{2n-1} f_{i}\,\varpi_{ij}(f)(\xi).
\end{equation}
The horizontal curves, those with $\varpi(f(t))(\dot f(t))=0$ for all $t$, are exactly those whose vectors are $\nabla$-parallel along the base curve, i.e., $\frac{\nabla f_{j}}{dt}= 0$ for every $j$. Since $\nabla\bar g=0$, parallel transport is isometric, so $\varpi$ restricts to the bundle of $\bar g$-orthonormal frames of $TN$, on which $\varpi(f)(\xi)\in\mathfrak o(2n-1)$ is skew-symmetric. We show that the remaining conditions of the datum make $\varpi$ restrict further to an $H$-principal connection on $\mathcal P_{\mathcal S}$: namely $\nabla U=0$, $\nabla\mathcal E\subset\mathcal E$ and $\nabla\Psi=0$ ensure that parallel transport carries adapted bases to adapted bases.

To compute this restriction, let $\xi_b\in T_{b}\mathcal P_{\mathcal S}$ and choose a curve
\[
b(t)=\big(U(x(t)),\varepsilon_{1}(t),\dots,\varepsilon_{n-1}(t),k_{1}(t),\dots,k_{n-1}(t)\big)
\]
in $\mathcal P_{\mathcal S}$ with $b(0)=b$ and $\dot b(0)=\xi_b$, where $x(t)=p_{\mathcal S}(b(t))$. We evaluate \eqref{eq:induced-connection-form} on the three blocks of this frame. First, $\nabla U=0$ gives
\[
\left.\frac{\nabla U}{dt}\right|_{t=0}=0 .
\]
Next, since each $\varepsilon_j(t)$ lies in $\mathcal E_{x(t)}$ and
$\nabla\mathcal E\subset\mathcal E$, we have
$\left.\frac{\nabla\varepsilon_{j}}{dt}\right|_{t=0}\in\mathcal E_{x}$, and
differentiating $\bar g(\varepsilon_{i}(t),\varepsilon_{j}(t))=\delta_{ij}$ with
$\nabla\bar g=0$ shows that its matrix in the orthonormal basis
$(\varepsilon_{1},\dots,\varepsilon_{n-1})$ is skew-symmetric. Hence there are
unique scalars $\gamma_{ij}(b)(\xi_b)$ with
\begin{equation}\label{eq:gamma-block}
\left.\frac{\nabla\varepsilon_{j}}{dt}\right|_{t=0}
=\sum_{i=1}^{n-1}\varepsilon_{i}\,\gamma_{ij}(b)(\xi_b)\quad\text{and}
\quad
\gamma_{\mathcal S}(b)(\xi_b):=\big(\gamma_{ij}(b)(\xi_b)\big)\in\mathfrak h.
\end{equation}
Finally, differentiating $k_{j}(t)=\Psi_{x(t)}(\varepsilon_{j}(t))$, we obtain that
\[
\left.\frac{\nabla k_{j}}{dt}\right|_{t=0}
=\big(\nabla_{\dot x(0)}\Psi\big)(\varepsilon_{j})
+\Psi_{x}\!\left(\left.\frac{\nabla\varepsilon_{j}}{dt}\right|_{t=0}\right).
\]
The first term vanishes because $\nabla\Psi=0$; by the previous display and the linearity of $\Psi_{x}$, the second gives
\[
\left.\frac{\nabla k_{j}}{dt}\right|_{t=0}
=\Psi_{x}\!\left(\sum_{i=1}^{n-1}\varepsilon_{i}\,\gamma_{ij}(b)(\xi_{b})\right)
=\sum_{i=1}^{n-1}k_{i}\,\gamma_{ij}(b)(\xi_{b}),
\]
so the same matrix $\gamma_{\mathcal S}(b)(\xi_{b})$ governs the $\mathcal B$-block. Thus, in the adapted frame $b$, the matrix \eqref{eq:induced-connection-form} is
\[
\varpi(b)(\xi_{b})=
\begin{pmatrix}
0 & 0 & 0\\
0 & \gamma_{\mathcal S}(b)(\xi_{b}) & 0\\
0 & 0 & \gamma_{\mathcal S}(b)(\xi_{b})
\end{pmatrix},
\]
relative to the splitting $T_{x}N=\mathbb R\,U(x)\oplus\mathcal E_{x}\oplus\mathcal B_{x}$. This matrix lies in $\mathfrak h$, embedded in $\mathfrak o(2n-1)$ as the diagonal copy $A\mapsto\operatorname{diag}(0,A,A)$, which is the infinitesimal form of the embedding $h\mapsto\operatorname{diag}(1,h,h)$ of Remark~\ref{19062026asd}. Consequently, the restriction
\[
\gamma_{\mathcal S}:=\varpi|_{\mathcal P_{\mathcal S}}\in\Omega^{1}(\mathcal P_{\mathcal S},\mathfrak h)
\]
is an $\mathfrak h$-valued $1$-form on $\mathcal P_{\mathcal S}$.

That $\gamma_{\mathcal S}$ is a principal connection follows from \eqref{eq:gamma-block}. For $A\in\mathfrak h$, the fundamental vector field $\zeta_{A}(b)=\left.\frac{d}{dt}\right|_{t=0}\left(b \cdot \mathrm{exp}\left(tA\right)\right)$ is the velocity of the curve $b(t)=b\cdot\exp(tA)$, for which $x(t)\equiv x$ and $\varepsilon_{j}(t)=\sum_{i}\varepsilon_{i}\,(\exp tA)_{ij}$. The base curve is constant, so $\dot x(t)\equiv0$ and the covariant derivative along it reduces to the ordinary derivative; hence $\left.\frac{\nabla\varepsilon_{j}}{dt}\right|_{t=0}=\sum_{i}\varepsilon_{i}\,A_{ij}$, whence $\gamma_{\mathcal S}(b)(\zeta_{A}(b))=A$. The equivariance
\[
(r^{h})^{*}(\gamma_{\mathcal S})=\operatorname{Ad}(h^{-1})\circ\gamma_{\mathcal S},\qquad h\in H,
\]
is the transformation law of \eqref{eq:gamma-block} under the change of basis $b\mapsto bh$. Therefore $\gamma_{\mathcal S}\in\Omega^{1}(\mathcal P_{\mathcal S},\mathfrak h)$ is an $H$-principal connection on $\mathcal P_{\mathcal S}$.

Set $\omega_{\mathcal S}:=\gamma_{\mathcal S}+\vartheta_{\mathcal S}\in\Omega^{1}(\mathcal P_{\mathcal S},\mathfrak g)$,
according to $\mathfrak g=\mathfrak h\oplus\mathfrak m$. If
$\vartheta_{\mathcal S}=(\vartheta_{\mathcal S}^{0},\vartheta_{\mathcal S}^{E},\vartheta_{\mathcal S}^{B})$ denotes its $e_{0}$-, $E$- and $B$-components, then in matrix form
\[
\omega_{\mathcal S}=
\begin{pmatrix}
0&0&0\\
\vartheta_{\mathcal S}^{0}&0&-(\vartheta_{\mathcal S}^{B})^{t}\\
\vartheta_{\mathcal S}^{E}&\vartheta_{\mathcal S}^{B}&\gamma_{\mathcal S}
\end{pmatrix},
\]
consistent with the description of $\mathfrak g/\mathfrak h$ in
Subsection~\ref{subsec:euc-on-minus-one-model}. By construction, $\omega_{\mathcal S}$ is readily checked to be a Cartan connection of type $(G,H)$, so that $(p_{\mathcal S}:\mathcal P_{\mathcal S}\to N,\omega_{\mathcal S})$ is a Cartan geometry of that type.

We have associated with each Cartan datum a Cartan geometry of type $(G,H)$. To establish the asserted bijection, it remains to check that this assignment and the one extracting a Cartan datum from a Cartan geometry are mutually inverse; we verify the two composites in turn.

Let $(p:\mathcal P\to N,\omega)$ be a Cartan geometry of type $(G,H)$, and write
$\gamma=\operatorname{pr}_{\mathfrak h}\circ\,\omega$ and
$\vartheta=\operatorname{pr}_{\mathfrak m}\circ\,\omega$. Propositions
\ref{thm:euc-onminusone-structure-new} and \ref{prop:canonical-linear-connection-induced}
yield the Cartan datum $\mathcal S=(\bar g,U,\alpha,\mathcal E,\mathcal B,\Psi,\nabla^{\gamma})$; let
$(p_{\mathcal S}:\mathcal P_{\mathcal S}\to N,\omega_{\mathcal S})$ be the Cartan geometry of
type $(G,H)$ associated with $\mathcal S$ as above. By construction, $\mathcal P_{\mathcal S}$ is precisely the bundle $\mathcal F_{\bar g}(N)$ of
admissible orthonormal frames described in Remark~\ref{19062026asd}; accordingly, the map
\[
F:\mathcal P\longrightarrow\mathcal P_{\mathcal S},\qquad
F(u)=\big(U(x),\,\phi_{u}^{-1}(e_{1}),\dots,\phi_{u}^{-1}(e_{n-1}),\,
\phi_{u}^{-1}(b_{1}),\dots,\phi_{u}^{-1}(b_{n-1})\big),
\]
where $x=p(u)$ and $\phi_{u}$ is the isomorphism given in \eqref{isomor}, is an isomorphism of $H$-principal fiber bundles. Indeed, $F(u)\in(\mathcal P_{\mathcal S})_{x}$: $\phi_u$ is a linear isometry, so the vectors $\phi_{u}^{-1}(e_{i})$ form a
$\bar g$-orthonormal basis of $\mathcal E_{x}=\phi_u^{-1}(E)$, and $\psi(e_{i})=b_{i}$ together with
$\Psi_{x}=\phi_u^{-1}\circ\psi\circ\phi_u$ gives $\phi_{u}^{-1}(b_{i})=\Psi_x(\phi_{u}^{-1}(e_{i}))$. It remains to compare the Cartan connections. For $u\in\mathcal P$ the frame $F(u)$ satisfies
$I_{F(u)}=\phi_{u}^{-1}$, since both maps send $(e_{0},e_{i},b_{i})$ to
$(U(x),\phi_{u}^{-1}(e_{i}),\phi_{u}^{-1}(b_{i}))$. Using $p_{\mathcal S}\circ F=p$ and the definition of the soldering form,
\[
\big(F^{*}(\vartheta_{\mathcal S})\big)(u)(\xi_{u})
=I_{F(u)}^{-1}\big(T_{u}p\cdot\xi_{u}\big)
=\phi_{u}\big(T_{u}p\cdot\xi_{u}\big)
=\vartheta(u)(\xi_{u}).
\]
For the $\mathfrak h$-components, $F^{*}(\gamma_{\mathcal S})$ is again a principal connection, since $F$ is an isomorphism of $H$-principal fiber bundles. As any two principal connections agree on
fundamental vector fields, $F^{*}(\gamma_{\mathcal S})=\gamma$ will follow once they are shown to have the same horizontal distribution. Let $\xi_{u}\in T_{u}\mathcal P$ be $\gamma$-horizontal, and let $u(t)$ be a $\gamma$-horizontal curve with $u(0)=u$ and $\dot u(0)=\xi_{u}$; write $x(t)=p(u(t))$, so that $\dot u(t)=\big(\dot x(t)\big)^{\gamma}_{u(t)}$ for all $t$. Fix $X\in\mathfrak m$ and set $V(t):=\phi_{u(t)}^{-1}(X)\in T_{x(t)}N$, a vector field along $x(t)$. Its equivariant representative is constant along $u(t)$, \[ f_{V}\big(u(t)\big)=\phi_{u(t)}\big(V(t)\big)=\phi_{u(t)}\big(\phi_{u(t)}^{-1}(X)\big)=X . \] Applying the formula $\phi_{u}\big(\nabla^{\gamma}_{W_{x}}V\big)=T_{u}f_{V}\cdot W^{\gamma}_{u}$ of Proposition~\ref{prop:canonical-linear-connection-induced} along $u(t)$, with $W^{\gamma}_{u(t)}=\dot u(t)$, \[ \phi_{u(t)}\!\left(\frac{\nabla^{\gamma}V}{dt}\right) =T_{u(t)}f_{V}\cdot\dot u(t) =0 , \] and since $\phi_{u(t)}$ is an isomorphism, $\dfrac{\nabla^{\gamma}V}{dt}=0$: the field $\phi_{u(t)}^{-1}(X)$ is $\nabla^{\gamma}$-parallel along $x(t)$. Taking $X\in\{e_{0},e_{1},\dots,e_{n-1},b_{1},\dots,b_{n-1}\}$ and recalling $\phi_{u(t)}^{-1}(e_{0})=U(x(t))$, every vector of the frame \[ F\big(u(t)\big)=\big(U(x(t)),\,\phi_{u(t)}^{-1}(e_{1}),\dots,\phi_{u(t)}^{-1}(e_{n-1}),\, \phi_{u(t)}^{-1}(b_{1}),\dots,\phi_{u(t)}^{-1}(b_{n-1})\big) \] is $\nabla^{\gamma}$-parallel along $x(t)$. By the construction of $\gamma_{\mathcal S}$ in \eqref{eq:gamma-block}, $\gamma_{\mathcal S}(b)(\xi_{b})$ vanishes precisely when the frame with $\dot b(0)=\xi_{b}$ is $\nabla^{\gamma}$-parallel at $t=0$; as $\frac{d}{dt}\big|_{t=0}F(u(t))=T_{u}F\cdot\xi_{u}$, this gives \[ \gamma_{\mathcal S}(F(u))\big(T_{u}F\cdot\xi_{u}\big)=0 . \] Hence $F$ carries $\gamma$-horizontal vectors to $\gamma_{\mathcal S}$-horizontal vectors, and therefore $F^{*}(\gamma_{\mathcal S})=\gamma$. Adding the two identities,
\[
F^{*}(\omega_{\mathcal S})=F^{*}(\gamma_{\mathcal S})+F^{*}(\vartheta_{\mathcal S})
=\gamma+\vartheta=\omega,
\]
so the reconstructed Cartan geometry is canonically isomorphic to the original one.

Conversely, start from a Cartan datum $\mathcal S=(\bar g,U,\alpha,\mathcal E,\mathcal B,\Psi,\nabla)$
and let $(p_{\mathcal S}:\mathcal P_{\mathcal S}\to N,\omega_{\mathcal S})$ be the Cartan geometry
constructed above. For $b\in\mathcal P_{\mathcal S}$ with $x=p_{\mathcal S}(b)$, the linear
isomorphism $\phi_{b}$ satisfies $\phi_{b}=I_{b}^{-1}$. Write
$(\bar g',U',\alpha',\mathcal E',\mathcal B',\Psi')$ for the structure induced by
$\omega_{\mathcal S}$. Since $\phi_{b}=I_{b}^{-1}$ and $I_{b}$ is the linear isometry sending the
orthonormal basis $(e_{0},e_{i},b_{i})$ of $\mathfrak m$ to the
$\bar g$-orthonormal basis $b$, a direct computation gives $\bar g'=\bar g$, $U'=U$,
$\alpha'=\alpha$, $\mathcal E'=\mathcal E$, $\mathcal B'=\mathcal B$ and $\Psi'=\Psi$. The only point
to note is the value of $\Psi'$ on the $\mathcal B$-vectors: by \eqref{050694836278},
\[
\Psi'_{x}(k_{i})=I_{b}\big(\psi(b_{i})\big)=-\varepsilon_{i}=\Psi_{x}(k_{i}),
\]
where the last equality uses $\Psi_{x}(k_{i})=\Psi_{x}^{2}(\varepsilon_{i})=-\varepsilon_{i}$, valid
because $\varepsilon_{i}\in\mathrm{Ker}\,(\alpha_{x})$. Finally, it is straightforward to check that $\nabla$ and $\nabla^{\gamma_{\mathcal S}}$ have the same parallel frames, hence the same parallel transport, so $\nabla=\nabla^{\gamma_{\mathcal S}}$. Therefore the Cartan datum induced by
$(p_{\mathcal S}:\mathcal P_{\mathcal S}\to N,\omega_{\mathcal S})$ is $\mathcal S$. Together, the two reconstructions establish the asserted bijection.
\end{proof}
The correspondence established in
Theorem~\ref{thm:euc-onminusone-equivalence} can be summarised by the following diagram, where all arrows are to be
understood as correspondences:
\smallskip

\[
\begin{tikzpicture}[
  baseline=(current bounding box.center),
  >=stealth,
  semithick,
  every node/.style={inner sep=1pt}
]
% lado izquierdo
\node (lp) {$\text{Cartan datum }\bigl($};
\node[right=1pt of lp] (data) {$\bar g,U,\alpha,\mathcal E,\mathcal B,\Psi,$};
\node[right=1pt of data] (comma) {$ $};
\node[right=1pt of comma] (nabla) {$\nabla$};
\node[right=1pt of nabla] (rp) {$\bigr)$};

% lado derecho
\node[right=14mm of rp] (rpre) {$\bigl($};
\node[right=1pt of rpre] (bundle) {$p:\mathcal P\to N,$};
\node[right=1pt of bundle] (rcomma) {$ $};
\node[right=1pt of rcomma] (omegapre) {$\omega=$};
\node[right=1pt of omegapre] (gamma) {$\gamma$};
\node[right=1pt of gamma] (plus) {$+$};
\node[right=1pt of plus] (theta) {$\vartheta$};
\node[right=1pt of theta] (rpost)
  {$\bigr)\text{ of type }(G,H)$};

% doble flecha central
\node at ($(rp.east)!0.5!(rpre.west)$) {$\Longleftrightarrow$};

% llave izquierda
\draw[decorate,decoration={brace,mirror,amplitude=4pt}]
  ([yshift=-2pt]data.south west) -- ([yshift=-2pt]data.south east);

\coordinate (leftbrace) at
  ([yshift=-8pt]$(data.south west)!0.5!(data.south east)$);

% llave derecha bajo p:P->N
\draw[decorate,decoration={brace,mirror,amplitude=4pt}]
  ([yshift=-2pt]bundle.south west) -- ([yshift=-2pt]bundle.south east);

\coordinate (rightbrace) at
  ([yshift=-8pt]$(bundle.south west)!0.5!(bundle.south east)$);

% punto de bifurcación
\coordinate (split) at
  ([yshift=-18pt]$(leftbrace)!0.58!(rightbrace)$);

% tronco común
\draw[<-]
  (leftbrace)
  .. controls +(0,-9pt) and +(-15pt,0) ..
  (split);

% rama hacia la llave bajo p:\mathcal P\to N
\draw[->]
  (split)
  .. controls +(15pt,0) and +(0,-9pt) ..
  (rightbrace);

% rama hacia la forma de soldadura
\draw[->]
  (split)
  .. controls +(24pt,-1pt) and +(0,-16pt) ..
  (theta.south);

% correspondencia nabla <-> gamma
\draw[<->]
  (nabla.north) to[out=25,in=155] (gamma.north);
\end{tikzpicture}
\]
\smallskip

\begin{remark}\label{rmk:recognition-correspondence-euc}
{\rm We record when a Cartan geometry $(p:\mathcal P\to N,\omega)$ of type $(G,H)$ arises, locally, as
the unit tangent bundle of a Riemannian manifold; that is, as a correspondence space of a Cartan
geometry of type $(G,P)$. Under the identification $\mathfrak m\cong\mathfrak g/\mathfrak h$, the
subspace $B$ lies in $\mathfrak p$ and $\mathfrak p=\mathfrak h\oplus B$, so $B$ is a set of
representatives for $\mathfrak p/\mathfrak h$, and the distribution
$\mathcal V(N)=\mathcal P\times_H(\mathfrak p/\mathfrak h)$ of \cite[Theorem 1.5.14]{CS09} is exactly
the distribution $\mathcal B$ induced by $\omega$. The criterion therefore specializes as follows:
$(p:\mathcal P\to N,\omega)$ is locally a correspondence space of a Cartan geometry of type $(G,P)$
if and only if $\mathcal B$ is integrable and its curvature function satisfies
\begin{equation}\label{eq:recognition-correspondence-curvature}
\kappa(u)(X,Y)=0,\qquad u\in\mathcal P,\ X\in B,\ Y\in\mathfrak m,
\end{equation}
equivalently $K\big(\omega^{-1}(X),\omega^{-1}(Y)\big)=0$. Indeed, since $\mathfrak p= \mathfrak h\oplus B$,
the general condition $\kappa(u)(X+\mathfrak h,\cdot)=0$, $X\in\mathfrak p$, is precisely
\eqref{eq:recognition-correspondence-curvature}. When it holds, for any sufficiently small local
twistor space $f:U\to M$, there exists a Cartan geometry of type $(G,P)$ on $M$ such that the
restricted Cartan geometry $(p:p^{-1}(U)\to U,\left.\omega\right|_{U})$ is isomorphic to an open subset of the
correspondence space $\mathcal C(M)$; and since $P/H\cong\mathbb S^{n-1}$ is connected for $n\ge2$,
this Cartan geometry on $M$ is unique.}
\end{remark}

\subsection{The normal Cartan connection}
\label{subsec:normal-adapted-cartan}
We now single out a canonical Cartan connection associated with the underlying
structure \((\bar g,U,\alpha,\mathcal E,\mathcal B,\Psi)\). By the construction given in the proof of
Theorem~\ref{thm:euc-onminusone-equivalence}, the data
\((\bar g,U,\alpha,\mathcal E,\mathcal B,\Psi)\) determine the corresponding
\(H\)-principal fiber bundle and its soldering form \(\vartheta\). Thus, for
\(n\geq3\), the remaining freedom in choosing a Cartan geometry of type \((G,H)\)
inducing this underlying structure lies entirely in the principal-connection part
\(\gamma\) of the Cartan connection \(\omega=\gamma+\vartheta\). We remove this freedom
by a normalization procedure adapted from the standard treatment of first-order structures with structure group \(H\) and trivial first prolongation to the present setting; see \cite[Section~1.6.1]{CS09} and
\cite[Section~2.2]{CS17}. In the present formulation, the normalization condition is
imposed directly on the \(\mathfrak m\)-part of the Cartan curvature, that is, on the
torsion of the Cartan connection.

On $\mathfrak h$ we fix an $H$-invariant Euclidean inner product $\langle\cdot,\cdot\rangle_{\mathfrak h}$, which exists since $H$ is compact. We now define
\[
\partial:\mathfrak m^*\otimes\mathfrak h
\longrightarrow
\Lambda^2\mathfrak m^*\otimes\mathfrak m
\]
by
\begin{equation}\label{eq:normal-spencer}
(\partial A)(X,Y)
=
[A(X),Y]
-
[A(Y),X],
\qquad
X,Y\in\mathfrak m.
\end{equation}
Since $\mathfrak m$ is $H$-invariant we have
$[\mathfrak h,\mathfrak m]\subset\mathfrak m$, so the brackets in \eqref{eq:normal-spencer} automatically lie in $\mathfrak m$; moreover $(\partial A)(X,Y)$ is antisymmetric in $X,Y$ by construction, so
indeed $\partial A\in\Lambda^2\mathfrak m^*\otimes\mathfrak m$. This operator is known as the Spencer differential. Let
\[
\partial^*\colon
\Lambda^2\mathfrak m^*\otimes\mathfrak m
\longrightarrow
\mathfrak m^*\otimes\mathfrak h
\]
be the adjoint of \(\partial\), characterised by
\begin{equation}\label{eq:normal-adjoint-definition}
\langle \partial A,T\rangle_{\Lambda^2\mathfrak m^*\otimes\mathfrak m}
=
\langle A,\partial^*T\rangle_{\mathfrak m^*\otimes\mathfrak h},
\qquad
A\in\mathfrak m^*\otimes\mathfrak h,\quad T\in\Lambda^2\mathfrak m^*\otimes\mathfrak m.
\end{equation}
The two inner products here are the canonical ones induced by
$\langle\,\cdot\,,\cdot\,\rangle_{\mathfrak m}$ and by
$\langle\,\cdot\,,\cdot\,\rangle_{\mathfrak h}$. Since both spaces are finite-dimensional, $\partial^{*}$ is well defined and unique; and, by
non-degeneracy of the inner product, $\mathrm{Ker}\,(\partial^{*})=(\operatorname{Im}(\partial))^{\perp}$. The injectivity of $\partial$ follows from the general discussion in \cite[Section~1.6.1]{CS09}; we give a direct proof here for completeness.

\begin{lemma}
\label{lem:normal-partial-injective}
For \(n\geq3\), the map $\partial:\mathfrak m^*\otimes\mathfrak h
\longrightarrow
\Lambda^2\mathfrak m^*\otimes\mathfrak m$ is injective.
\end{lemma}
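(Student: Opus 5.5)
The plan is to identify $\operatorname{Ker}(\partial)$ with the first prolongation of $\mathfrak h$, viewed as a subalgebra of $\mathfrak{gl}(\mathfrak m)$, and to show that it vanishes by a block-by-block analysis adapted to the splitting $\mathfrak m=\mathbb R e_0\oplus E\oplus B$ of \eqref{eq:W-decomposition-new}.

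First I would make the bracket $[\mathfrak h,\mathfrak m]$ explicit. Differentiate \eqref{eq:ad-o-n-1-new}, or compute the matrix commutator directly. For $a\in\mathfrak h=\mathfrak o(n-1)$ this should give
\[
[a,(v_0,v,w)]=(0,av,aw).
\]
So $a$ kills $e_0$ and acts by the same skew matrix on $E$ and on $B$. Concretely, $[a,e_i]=\sum_j a_{ji}e_j$ and $[a,b_i]=\sum_j a_{ji}b_j$. In particular $[a,\cdot\,]$ preserves each of $\mathbb R e_0$, $E$ and $B$. Moreover, $a$ is determined by its action on $B$ alone, and also by its action on $E$ alone.

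Next, suppose $\partial A=0$. By \eqref{eq:normal-spencer} this means $[A(X),Y]=[A(Y),X]$ for all $X,Y\in\mathfrak m$. I would take first $X\in E$ and $Y\in B$. The left side lies in $B$ and the right side lies in $E$, because each $A(\cdot)$ preserves the blocks. Since $E\cap B=0$, both sides vanish. Hence $[A(X),Y]=0$ for every $Y\in B$. Because an element of $\mathfrak h$ is determined by its action on $B$, this gives $A(X)=0$ for all $X\in E$. The symmetric argument, with $X\in B$ and $Y\in E$, gives $A(X)=0$ for all $X\in B$. Finally I would take $X=e_0$ and $Y\in E$. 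The relation reads $[A(e_0),Y]=[A(Y),e_0]=0$, since $\mathfrak h$ kills $e_0$. Because $A(e_0)$ is determined by its action on $E$, this forces $A(e_0)=0$. Therefore $A=0$, and $\partial$ is injective.

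The only step needing any care is the explicit formula for $[\mathfrak h,\mathfrak m]$, namely the claim that $\mathfrak h$ acts diagonally and faithfully on each of $E$ and $B$. The rest is the block-separation trick, which sidesteps the usual symmetric/skew argument for the prolongation of $\mathfrak o(n-1)$. The hypothesis $n\ge3$ only ensures that $\mathfrak h\neq0$, so that the statement has content. The argument is formally valid for $n=2$ as well, where the domain $\mathfrak m^*\otimes\mathfrak h$ is zero.
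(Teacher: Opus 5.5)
Your proof is correct and takes the same approach as the paper: you compute the bracket $[a,(v_0,v,w)]=(0,av,aw)$ explicitly, use block separation on $E\times B$ together with faithfulness to kill $A$ on $E$ and on $B$, and treat $A(e_0)$ in a separate step. The only difference is cosmetic: the paper disposes of $A(e_0)$ first, by pairing $e_0$ with an arbitrary $X\in\mathfrak m$, whereas you handle it last, by pairing $e_0$ with $Y\in E$.
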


\begin{proof}
Let
\[
A:\mathfrak m\longrightarrow\mathfrak h
\]
satisfy \(\partial A=0\). Taking $Y=e_0$ in \eqref{eq:normal-spencer}, and using $[\mathfrak h,e_0]=0$, gives
\[
0=(\partial A)(X,e_0)=-[A(e_0),X]
\]
for every $X\in\mathfrak m$. Note that if $C\in\mathfrak h$, then its bracket action on
$\mathfrak m$ is given by $[C,X]=(0,Cv,Cw)$ for $X=(v_0,v,w)\in\mathfrak m$. In particular, $C$ acts on the $E$- and $B$-components by the standard
representation of $\mathfrak h$. Since this representation is faithful, the condition
$[A(e_0),X]=0$ for all $X\in\mathfrak m$ forces $A(e_0)=0$. Now take $X\in E$ and $Y\in B$. Then
\[
0=(\partial A)(X,Y)=[A(X),Y]-[A(Y),X].
\]
The first term lies in $B$ and the second in $E$, so both vanish since
$\mathfrak m=\mathbb Re_0\oplus E\oplus B$ is a direct sum. Letting $Y$ range over $B$, faithfulness
gives $A(X)=0$ for every $X\in E$; the same argument with the roles of $E$ and $B$ interchanged
gives $A(Y)=0$ for every $Y\in B$. Together with $A(e_0)=0$, this gives $A=0$.
\end{proof}

\begin{remark}\label{rmk:first-prolongation}
{\rm In the terminology of \cite[Section~1.6.1]{CS09}, the kernel of $\partial$ is called the first prolongation of $\mathfrak h$ and denoted $\mathfrak h^{(1)}$. By
Lemma~\ref{lem:normal-partial-injective}, it is trivial for $n\geq3$.}
\end{remark}

\begin{definition}
\label{def:normal-adapted-cartan}
Let $(p:\mathcal P\to N,\omega)$ be a Cartan geometry of type $(G,H)$, and let $K^\omega=d\omega+\frac12[\omega,\omega]$ be its curvature form.  The torsion of $\omega$ is the $\mathfrak m$-part of its curvature function
$\kappa^\omega$, that is, the map
\[
T^\omega:=\operatorname{pr}_{\mathfrak m}\circ\,\kappa^\omega\colon
\mathcal P\longrightarrow\Lambda^2\mathfrak m^*\otimes\mathfrak m,
\qquad
T^\omega(u)(X,Y)=\operatorname{pr}_{\mathfrak m}\left[K^\omega(u)\big(\omega^{-1}(X)(u),\omega^{-1}(Y)(u)\big)\right].
\]
We say that $\omega$ is normal if $\partial^*T^\omega=0$.
\end{definition}

\begin{remark}\label{rmk:equivariance}
{\rm For later use, we record the $H$-equivariance of $\partial$, $\partial^{*}$ and $\partial^{*}T^{\omega}$. Two
notions of equivariance are involved. A linear map $\phi\colon V\to W$ between $H$-modules is called
$H$-equivariant if $\phi(h\cdot v)=h\cdot\phi(v)$ for all $h\in H$ and $v\in V$, equivalently
$\phi\circ(h\cdot)=(h\cdot)\circ\phi$; this is the sense in which $\partial$ and $\partial^{*}$ are
equivariant. A function $f\colon\mathcal P\to\mathbb V$ with values in an $H$-module $\mathbb V$ is called
$H$-equivariant if $f(uh)=h^{-1}\cdot f(u)$ for all $u\in\mathcal P$ and $h\in H$, and such functions are
precisely the sections of $\mathcal P\times_H\mathbb V$; this is the sense in which $\partial^{*}T^{\omega}$
is equivariant.

On the tensor spaces \(\mathfrak m^{*}\otimes\mathfrak h\) and \(\Lambda^{2}\mathfrak m^{*}\otimes\mathfrak m\)
we consider the natural actions induced by $\underline{\operatorname{Ad}}$ on $\mathfrak m$ and
$\operatorname{Ad}$ on $\mathfrak h$, which we now make explicit. On $\mathfrak m^{*}$ we take the dual
\(H\)-module structure,
\[
(h\cdot\xi)(X)=\xi\big(\underline{\operatorname{Ad}}(h^{-1})(X)\big),\qquad \xi\in\mathfrak m^{*},\ X\in\mathfrak m,
\] while on a tensor product of $H$-modules, $H$ acts by
$h\cdot(v\otimes w)=(h\cdot v)\otimes(h\cdot w)$. By linearity, it is enough to compute the action on tensors of the form
\(A=\xi\otimes Z\), with \(\xi\in\mathfrak m^{*}\) and \(Z\in\mathfrak h\), giving
\begin{equation}\label{eq:induced-action-A}
(h\cdot A)(X)
=
\xi\big(\underline{\operatorname{Ad}}(h^{-1})(X)\big)\operatorname{Ad}(h)(Z)
=
\operatorname{Ad}(h)\big(
A(\underline{\operatorname{Ad}}(h^{-1})(X))
\big).
\end{equation}
The same argument applied to
\(\Lambda^{2}\mathfrak m^{*}\otimes\mathfrak m\) gives
\begin{equation}\label{eq:induced-action-T}
(h\cdot T)(X,Y)
=
\underline{\operatorname{Ad}}(h)\big(
T(
\underline{\operatorname{Ad}}(h^{-1})(X),
\underline{\operatorname{Ad}}(h^{-1})(Y)
)
\big).
\end{equation}
These formulas hold for all
\(A\in\mathfrak m^{*}\otimes\mathfrak h\),
\(T\in\Lambda^{2}\mathfrak m^{*}\otimes\mathfrak m\), and
\(X,Y\in\mathfrak m\).

Since $\langle\cdot,\cdot\rangle_{\mathfrak m}$ and $\langle\cdot,\cdot\rangle_{\mathfrak h}$ are
$H$-invariant, $H$ acts by isometries on $\mathfrak m^{*}\otimes\mathfrak h$ and $\Lambda^{2}\mathfrak m^{*}\otimes\mathfrak m$, with the actions \eqref{eq:induced-action-A} and
\eqref{eq:induced-action-T} and the inner products of \eqref{eq:normal-adjoint-definition}. An invertible operator on a finite-dimensional inner
product space is an isometry if and only if its adjoint equals its inverse; applied to $h\cdot$, invertible with inverse $h^{-1}\cdot$, this yields $(h\cdot)^{*}=h^{-1}\cdot$ on each of the two tensor spaces.

We now check the equivariance of $\partial$ and $\partial^{*}$. For $\partial$, using the definition of $\partial$ and \eqref{eq:induced-action-A}, we obtain, for $X,Y\in\mathfrak m$,
\[
\begin{aligned}
\big(\partial(h\cdot A)\big)(X,Y)
&=\big[(h\cdot A)(X),\,Y\big]-\big[(h\cdot A)(Y),\,X\big]\\
&=\Big[\operatorname{Ad}(h)\big(A(\underline{\operatorname{Ad}}(h^{-1})(X))\big),\,Y\Big]
 -\Big[\operatorname{Ad}(h)\big(A(\underline{\operatorname{Ad}}(h^{-1})(Y))\big),\,X\Big]\\
&=\underline{\operatorname{Ad}}(h)\Big((\partial A)\big(\underline{\operatorname{Ad}}(h^{-1})(X),\underline{\operatorname{Ad}}(h^{-1})(Y)\big)\Big)
 =\big(h\cdot(\partial A)\big)(X,Y),
\end{aligned}
\]
the last equality by \eqref{eq:induced-action-T} applied to
$\partial A\in\Lambda^{2}\mathfrak m^{*}\otimes\mathfrak m$; thus
$\partial\circ(h\cdot)=(h\cdot)\circ\partial$. Passing to $\partial^{*}$, we take adjoints in this relation, which reverses the order to
$(h\cdot)^{*}\circ\partial^{*}=\partial^{*}\circ(h\cdot)^{*}$; substituting the identity
$(h\cdot)^{*}=h^{-1}\cdot$ obtained above gives $(h^{-1}\cdot)\circ\partial^{*}=\partial^{*}\circ(h^{-1}\cdot)$,
which, as $h$ ranges over $H$, reads $\partial^{*}(h\cdot T)=h\cdot(\partial^{*}T)$.

Finally, the torsion $T^{\omega}$ is $H$-equivariant (see \cite[Section~1.5]{CS09}), i.e., $T^{\omega}(uh)=h^{-1}\cdot T^{\omega}(u)$ for all $u\in\mathcal P$ and $h\in H$. Combining the equivariance of $\partial^{*}$ with that of $T^{\omega}$, 
we have
\[
(\partial^{*}T^{\omega})(uh)=\partial^{*}\big(T^{\omega}(uh)\big)=\partial^{*}\big(h^{-1}\cdot T^{\omega}(u)\big)
=h^{-1}\cdot\big(\partial^{*}T^{\omega}\big)(u),
\]
so $\partial^{*}T^{\omega}\colon\mathcal P\to\mathfrak m^{*}\otimes\mathfrak h$ is $H$-equivariant in the
second sense, $(\partial^{*}T^{\omega})(uh)=h^{-1}\cdot(\partial^{*}T^{\omega})(u)$; equivalently, it is a
section of $\mathcal P\times_H(\mathfrak m^{*}\otimes\mathfrak h)$.}
\end{remark}

\begin{theorem}\label{thm:normal-adapted-connection}
Assume \(n\geq3\). Among all Cartan geometries of type \((G,H)\) on a manifold $N$ whose Cartan connections share the same \(\mathfrak m\)-part \(\vartheta\), there exists a unique one whose Cartan connection is normal. Equivalently, the condition \(\partial^*T^\omega=0\) determines the principal-connection part uniquely.
\end{theorem}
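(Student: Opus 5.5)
The plan is the standard affine-space argument for first-order structures with trivial first prolongation. Fix the $H$-principal bundle $p:\mathcal P\to N$ and the soldering form $\vartheta$. By Theorem~\ref{thm:euc-onminusone-equivalence}, Cartan connections with $\mathfrak m$-part $\vartheta$ are exactly $\omega=\gamma+\vartheta$ with $\gamma$ a principal connection. Any two such connections differ by $\gamma'-\gamma=\Phi$, where $\Phi\in\Omega^1(\mathcal P,\mathfrak h)$ is horizontal and $H$-equivariant. Since $\vartheta$ is pointwise an isomorphism on horizontal vectors, $\Phi$ corresponds to an equivariant function $A:\mathcal P\to\mathfrak m^*\otimes\mathfrak h$ defined by $\Phi=A\circ\vartheta$. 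Conversely, every equivariant $A$ gives an admissible $\omega'=\omega+A\circ\vartheta$. So the set of candidate connections is an affine space modelled on sections of $\mathcal P\times_H(\mathfrak m^*\otimes\mathfrak h)$.

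The key computation is the change of torsion, and I expect it to be the main obstacle, mainly because of sign conventions. We have
\[
K^{\omega'}=K^\omega+d\Phi+[\omega,\Phi]+\tfrac12[\Phi,\Phi].
\]
Its $\mathfrak m$-part comes only from $[\vartheta,\Phi]$, since $d\Phi$, $[\gamma,\Phi]$ and $[\Phi,\Phi]$ are $\mathfrak h$-valued ($\mathfrak h$ is a subalgebra and $[\mathfrak h,\mathfrak m]\subset\mathfrak m$). We also need to express this in terms of $\omega'^{-1}(X)$ rather than $\omega^{-1}(X)$. On the two vectors the $\mathfrak m$-parts of $\omega$ and $\omega'$ agree. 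The $\mathfrak h$-parts differ by vertical vectors, which are killed by the horizontal forms $K^{\omega}$ and $[\vartheta,\Phi]$. The result should be
\[
T^{\omega'}(u)(X,Y)=T^\omega(u)(X,Y)+[A(u)(X),Y]-[A(u)(Y),X],
\]
that is, $T^{\omega'}=T^\omega+\partial A$. Possibly an overall sign must be flipped, which is harmless.

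Existence and uniqueness then reduce to linear algebra at each point. Lemma~\ref{lem:normal-partial-injective} gives injectivity of $\partial$, hence $\partial^*\partial$ is an invertible $H$-equivariant endomorphism of $\mathfrak m^*\otimes\mathfrak h$; equivariance follows from Remark~\ref{rmk:equivariance}. We need $\partial^*T^\omega+\partial^*\partial A=0$, so the unique solution is
\[
A:=-(\partial^*\partial)^{-1}\partial^*T^\omega.
\]
This $A$ is smooth. It is $H$-equivariant because $\partial^*T^\omega$ is equivariant (Remark~\ref{rmk:equivariance}) and $(\partial^*\partial)^{-1}$ commutes with the $H$-action. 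Hence $\omega+A\circ\vartheta$ is a genuine Cartan connection with $\mathfrak m$-part $\vartheta$, and it is normal.

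For uniqueness, if $\omega$ and $\omega'$ are both normal, then $\partial^*\partial A=0$. Taking the inner product with $A$ gives $|\partial A|^2=0$, so $\partial A=0$ and therefore $A=0$ by the lemma. Equivalently, $\mathrm{Im}\,\partial\cap\mathrm{Ker}\,\partial^*=0$.
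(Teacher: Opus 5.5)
Your proposal is correct and follows essentially the same route as the paper. Both arguments parametrize the Cartan connections with fixed $\mathfrak m$-part by equivariant maps $A\colon\mathcal P\to\mathfrak m^*\otimes\mathfrak h$ via $a=A\circ\vartheta$, derive $T^{\omega+a}=T^\omega+\partial A$ (with the sign you wrote), and solve $\partial^*T^\omega+\partial^*\partial A=0$ uniquely using positive definiteness of $\partial^*\partial$ from Lemma~\ref{lem:normal-partial-injective}, with equivariance supplied by Remark~\ref{rmk:equivariance}.
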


\begin{proof}
Fix an $H$-principal connection $\gamma^0$ on $\mathcal P$ and set $\omega^0:=\gamma^0+\vartheta$ and
$T^0:=T^{\omega^0}$. Given two $H$-principal connections on $\mathcal P$, they differ by a horizontal,
$H$-equivariant $\mathfrak h$-valued $1$-form; conversely, the sum of such a $1$-form and an $H$-principal
connection is again an $H$-principal connection. Hence every Cartan connection with $\mathfrak m$-part
$\vartheta$ can be written, in terms of $\gamma^0$, as
\[
\omega^a=(\gamma^0+a)+\vartheta,
\]
with $a$ a horizontal, $H$-equivariant $\mathfrak h$-valued $1$-form on $\mathcal P$. Being horizontal, $a$
factors through the soldering form: for each $u\in\mathcal P$ the linear map
$a(u)\colon T_u\mathcal P\to\mathfrak h$ vanishes on the vertical subspace $\ker\vartheta(u)$, hence factors
uniquely through the surjection $\vartheta(u)\colon T_u\mathcal P\to\mathfrak m$, yielding a unique
$A(u)\in\mathfrak m^{*}\otimes\mathfrak h$ with $a(u)=A(u)\circ\vartheta(u)$, that is,
$a(u)(\xi)=A(u)\big(\vartheta(u)(\xi)\big)$ for $\xi\in T_u\mathcal P$. This defines a map
$A\colon\mathcal P\to\mathfrak m^{*}\otimes\mathfrak h$, which is $H$-equivariant in the sense of
Remark~\ref{rmk:equivariance}, $A(uh)=h^{-1}\cdot A(u)$.

We first record how the torsion changes with $a$, claiming that pointwise
\begin{equation}\label{eq:normal-torsion-change}
T^{\omega^a}(u)=T^0(u)+\partial\big(A(u)\big).
\end{equation}
Since $\omega^a=\omega^0+a$, the curvature forms satisfy
\[
K^{\omega^a}=K^{\omega^0}+da+[\omega^0,a]+\tfrac12[a,a].
\]
Fix $X,Y\in\mathfrak m$ and put $\xi_X:=(\omega^0)^{-1}(X)$ and $\xi_Y:=(\omega^0)^{-1}(Y)$. For
$X\in\mathfrak m$ the constant vector fields of $\omega^a$ and of $\omega^0$ differ by a vertical field,
$(\omega^a)^{-1}(X)=\xi_X-\zeta_{A(\cdot)(X)}$; since the curvature form of a Cartan connection is horizontal, evaluating
$K^{\omega^a}$ on the $\omega^a$-constant fields agrees with evaluating it on $\xi_X,\xi_Y$, so
\[
T^{\omega^a}(u)(X,Y)=\operatorname{pr}_{\mathfrak m}\big[K^{\omega^a}(u)\big(\xi_X(u),\xi_Y(u)\big)\big].
\]
We evaluate the four terms. First,
$\operatorname{pr}_{\mathfrak m}\big[K^{\omega^0}(u)\big(\xi_X(u),\xi_Y(u)\big)\big]=T^0(u)(X,Y)$.
The forms $da$ and $[a,a]$ are $\mathfrak h$-valued, hence annihilated by $\operatorname{pr}_{\mathfrak m}$.
Finally, using $\omega^0(u)(\xi_X(u))=X$, $\omega^0(u)(\xi_Y(u))=Y$ and $a(u)(\xi_X(u))=A(u)(X)$,
$a(u)(\xi_Y(u))=A(u)(Y)$,
\[
[\omega^0,a](u)\big(\xi_X(u),\xi_Y(u)\big)=[X,A(u)(Y)]-[Y,A(u)(X)],
\]
and both terms already lie in $\mathfrak m$, so $\operatorname{pr}_{\mathfrak m}$ acts as the identity and, by
antisymmetry of the bracket,
\[
\operatorname{pr}_{\mathfrak m}\big[[\omega^0,a](u)\big(\xi_X(u),\xi_Y(u)\big)\big]
=[A(u)(X),Y]-[A(u)(Y),X]=\big(\partial\big(A(u)\big)\big)(X,Y).
\]
Adding the four contributions gives \eqref{eq:normal-torsion-change}.

By \eqref{eq:normal-torsion-change}, $\omega^a$ is normal if and only if
\begin{equation}\label{eq:normalising-equation}
0=\partial^{*}T^{\omega^a}=\partial^{*}T^0+\partial^{*}\partial A.
\end{equation}
Since $\partial$ is injective (Lemma~\ref{lem:normal-partial-injective}) and the inner products are positive
definite,
\[
\langle\partial^{*}\partial A,A\rangle_{\mathfrak m^{*}\otimes\mathfrak h}
=\langle\partial A,\partial A\rangle_{\Lambda^{2}\mathfrak m^{*}\otimes\mathfrak m}
>0\quad\text{for }A\neq0.
\]
Hence the self-adjoint operator $\partial^{*}\partial$ on the finite-dimensional space
$\mathfrak m^{*}\otimes\mathfrak h$ is positive definite, in particular invertible, so
\eqref{eq:normalising-equation} has the unique solution
\begin{equation}\label{eq:normal-cartan-correction}
A=-(\partial^{*}\partial)^{-1}\partial^{*}T^0.
\end{equation}
By Remark~\ref{rmk:equivariance}, $\partial$, $\partial^{*}$ and $\partial^{*}T^0$ are $H$-equivariant; so is
$(\partial^{*}\partial)^{-1}$, being the inverse of the $H$-equivariant isomorphism $\partial^{*}\partial$.
Hence the solution \eqref{eq:normal-cartan-correction} is $H$-equivariant and defines a Cartan connection,
which by the above is the unique normal one with $\mathfrak m$-part $\vartheta$.
\end{proof}
In view of Theorem~\ref{thm:euc-onminusone-equivalence}, the normalisation
established in Theorem~\ref{thm:normal-adapted-connection} can be summarised
by the following diagram, where the double arrow denotes a one-to-one
correspondence:

\smallskip

\[
\boxed{
\begin{gathered}
(\bar g,U,\alpha,\mathcal E,\mathcal B,\Psi)
\text{ satisfying}\\[-1mm]
\text{properties }
\textit{\ref{it:metric}}-\textit{\ref{it:psi}}\text{ of}\\[-1mm]
\text{Proposition~\ref{thm:euc-onminusone-structure-new}}
\end{gathered}
}
\quad\Longleftrightarrow\quad
\boxed{
\begin{gathered}
\text{Cartan geometries }(p:\mathcal P\to N,\omega)\text{ of}\\[-1mm]
\text{type }(G,H)
\text{ such that }\partial^{*}T^{\omega}=0
\end{gathered}
}
\]
\smallskip

\begin{remark}
{\rm The normalisation carried out in this section can be performed whenever $G/H$ is a 
reductive Klein geometry, $\mathfrak h$ has trivial first prolongation, and $\mathfrak h$ and $\mathfrak m$
carry $H$-invariant inner products. The normalisation does not depend on the $H$-invariant inner product chosen on $\mathfrak h$, as is easily verified.}
\end{remark}
We take $\partial^{*}T^{\omega}=0$ to be the natural normalisation for Cartan geometries of type $(G,H)$:
the Cartan connection of a Riemannian geometry, regarded as one of type $(G,H)$ on its correspondence space, is normal. It is in this sense that the normalisation is natural, and the following proposition makes this precise.

\begin{proposition}
\label{prop:normal-compatible-with-riemannian-correspondence}
Assume \(n\geq3\). Let $(M,g)$ be a Riemannian manifold, and let $(q:\mathcal F_g(M)\to M,\omega)$ be the Riemannian geometry
associated with $(M,g)$. Under the identification $\mathcal U M\cong\mathcal F_g(M)/H$, the Cartan connection
$\omega$, regarded as one of type $(G,H)$ on $p:\mathcal F_g(M)\to\mathcal U M$, is the normal Cartan connection whose underlying structure is $(\widehat g,\mathbf Z_g,\tau,\mathcal E,V,\Phi)$.
\end{proposition}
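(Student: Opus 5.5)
The plan has two parts. First, identify the underlying structure. Then show that $\partial^*T^\omega=0$ by a purely algebraic orthogonality argument, after which the uniqueness in Theorem~\ref{thm:normal-adapted-connection} finishes the proof.

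\emph{Underlying structure.} By Proposition~\ref{prop:unit-bundle-structures-agree-new}, the structure induced by $\omega$ on $\mathcal U M$ is $(\widehat g,\mathbf Z_g,\tau,\mathcal E,V,\Phi)$. Here $\mathcal E$ is the orthogonal complement of $\mathbf Z_g$ in $\mathcal H$. So $\omega$ has the prescribed $\mathfrak m$-part $\vartheta$. By Theorem~\ref{thm:normal-adapted-connection}, it then suffices to prove that $\omega$ itself is normal.

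\emph{Locating the torsion.} The Cartan curvature $K$ is the same form on $\mathcal F_g(M)$ whether $\omega$ is regarded as being of type $(G,P)$ or of type $(G,H)$. Only the decomposition of the values changes.
\begin{itemize}
\item Since the Riemannian geometry is torsion-free, $K$ takes values in $\mathfrak p=\mathfrak h\oplus B$, where $B\subset\mathfrak p$ as noted in Remark~\ref{rmk:recognition-correspondence-euc}. Hence the $\mathfrak m$-part of $\kappa$ takes values in $B$.
\item By \eqref{081224A}, $\kappa(u)(X,\cdot)=0$ whenever $X\in B$, because $B\subset\mathfrak p$.
\end{itemize}
Therefore $T^\omega(u)$ is nonzero only on pairs $X,Y\in\mathbb R e_0\oplus E$, and on such pairs it is $B$-valued. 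Concretely, these values are the $(i,0)$ entries of the Riemann curvature $\Omega$. We do not need their explicit form.

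\emph{Orthogonality to $\operatorname{Im}\partial$.} Since $\operatorname{Ker}\partial^*=(\operatorname{Im}\partial)^\perp$, it suffices to check that $\langle\partial A,T^\omega(u)\rangle=0$ for every $A\in\mathfrak m^*\otimes\mathfrak h$.
\begin{itemize}
\item Use the orthonormal basis $(e_0,e_i,b_i)$. The inner product on $\Lambda^2\mathfrak m^*\otimes\mathfrak m$ is a sum, over pairs of basis vectors $(X,Y)$, of $\langle(\partial A)(X,Y),T(X,Y)\rangle_{\mathfrak m}$.
\item Any pair containing some $b_i$ contributes zero, because $T$ vanishes there.
\item For $X,Y\in\mathbb R e_0\oplus E$, we have $(\partial A)(X,Y)=[A(X),Y]-[A(Y),X]$. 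Since $\mathfrak h$ kills $e_0$ and preserves $E$ (see the proof of Lemma~\ref{lem:normal-partial-injective}), this value lies in $E$. It is therefore orthogonal to the $B$-valued $T(X,Y)$.
\end{itemize}
Hence $\partial^*T^\omega=0$ pointwise, so $\omega$ is normal. Uniqueness in Theorem~\ref{thm:normal-adapted-connection} then identifies $\omega$ as \emph{the} normal Cartan connection with this underlying structure.

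\emph{Main obstacle.} The delicate point is the bookkeeping in the torsion step. One must check that $\operatorname{pr}_{\mathfrak m}$, taken with respect to $\mathfrak g=\mathfrak h\oplus\mathfrak m$, sends $\mathfrak p$-valued curvature exactly onto its $B$-component. This holds because $\mathfrak p=\mathfrak h\oplus B$ is compatible with the splitting. One must also check that the $\omega$-constant fields used in $\kappa$ for the two types agree. They do, since they are the same form $\omega$ on the same bundle. Beyond this, the argument is purely algebraic.
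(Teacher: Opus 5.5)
Your proposal is correct and follows essentially the same route as the paper. You identify the underlying structure via Proposition~\ref{prop:unit-bundle-structures-agree-new}, then use torsion-freeness and horizontality with respect to $q$ to show that $T^\omega(u)$ is $B$-valued and nonzero only on pairs from $\mathbb R e_0\oplus E$, and finally observe that $\partial A$ is $E$-valued on such pairs, so $\langle\partial A,T^\omega(u)\rangle=0$. Your explicit appeal to the uniqueness in Theorem~\ref{thm:normal-adapted-connection}, to justify calling $\omega$ \emph{the} normal connection, is a small step the paper leaves implicit.
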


\begin{proof}
Recall that
\[
\mathfrak p=\mathfrak h\oplus B,
\qquad
\mathfrak m=\mathfrak t\oplus B
=\mathbb Re_0\oplus E\oplus B.
\]
By Proposition~\ref{prop:unit-bundle-structures-agree-new}, the underlying structure induced by $\omega$ is
exactly $(\widehat g,\mathbf Z_g,\tau,\mathcal E,V,\Phi)$. It remains only to verify normality.

Fix $u\in\mathcal F_g(M)$. Since $\omega$ is torsion-free as a Cartan connection of type $(G,P)$, the curvature form takes values in
$\mathfrak p$. Using the decomposition $\mathfrak p=\mathfrak h\oplus B$, together with the fact that
$\operatorname{pr}_{\mathfrak m}$ vanishes on $\mathfrak h$ and restricts to the identity on
$B\subset\mathfrak m$, we obtain
\[
T^\omega(u)\in\Lambda^2\mathfrak m^*\otimes B.
\]
Moreover, as the curvature form is horizontal with respect to
$q:\mathcal F_g(M)\to M$ and $B\subset\mathfrak p$, we have
$T^\omega(u)(X,\cdot)=0$ for every $X\in B$. Consequently the only pairs on which $T^\omega(u)$ can be non-zero are those with both arguments in
$\mathbb Re_0\oplus E$, and there $T^\omega(u)$ takes values in $B$.

Now let $A\in\mathfrak m^*\otimes\mathfrak h$ and compute
$\langle \partial A,T^\omega(u)\rangle_{\Lambda^2\mathfrak m^*\otimes\mathfrak m}$ term by term in the
orthonormal basis $(e_0,e_1,\ldots,e_{n-1},b_1,\ldots,b_{n-1})$ of $\mathfrak m$. If both basis vectors lie
in $\mathbb Re_0\oplus E$, then $(\partial A)(X,Y)=[A(X),Y]-[A(Y),X]\in E$, because $[\mathfrak h,e_0]=0$ and
$[\mathfrak h,E]\subseteq E$; as $T^\omega(u)(X,Y)\in B$ and $(\mathbb Re_0\oplus E)\perp B$, the
corresponding term vanishes. If at least one basis vector lies in $B$, then $T^\omega(u)$ vanishes on that
pair, so the term vanishes as well. Hence
\[
\langle \partial A,T^\omega(u)\rangle_{\Lambda^2\mathfrak m^*\otimes\mathfrak m}=0,\text{ for every }A\in\mathfrak m^*\otimes\mathfrak h,
\]
which is $(\partial^*T^\omega)(u)=0$ by \eqref{eq:normal-adjoint-definition}. Since $u$ was arbitrary,
$\partial^*T^\omega=0$.
\end{proof}
 
\begin{remark}\label{rmk:future-work}
{\rm For $n\geq 3$, once the $\mathfrak m$-part $\vartheta$ is fixed, Theorem~\ref{thm:normal-adapted-connection} uniquely determines the principal-connection part $\gamma$ for which the Cartan connection $\omega=\gamma+\vartheta$ is normal. The resulting $H$-principal connection $\gamma$ induces a distinguished linear connection $\nabla$ on $N$, namely the linear-connection component of the associated Cartan datum. In future work, we plan to compute $\nabla$ explicitly and to study its geometric properties, including its torsion, curvature, and geodesics.}
\end{remark}

\begin{remark}\label{rmk:nilpotent-not-correspondence}
{\rm
A natural source of Cartan geometries of type $(G,H)$ is that provided by the
correspondence space construction: through it, every unit tangent bundle carries a
canonical Cartan geometry of type $(G,H)$ with normal Cartan connection
(Proposition~\ref{prop:normal-compatible-with-riemannian-correspondence}). This
construction, however, does not yield all geometries of type $(G,H)$, not even
locally: there exist Cartan geometries of type $(G,H)$ that are not locally isomorphic
to the correspondence space of any Cartan geometry of type $(G,P)$. We construct such
a family on certain nilpotent Lie groups.

Fix $n\ge 3$ and a non-zero skew-symmetric $S=(s_{ij})\in\mathfrak h$. On $\R^{2n-1}$,
with distinguished basis $(e_0,e_1,\dots,e_{n-1},b_1,\dots,b_{n-1})$, define a bracket
by
\[
[b_i,b_j]=s_{ij}e_0\qquad(1\le i,j\le n-1),
\]
and all remaining brackets among basis vectors vanish. Every bracket takes values in
the central line $\R e_0$; the Jacobi identity therefore holds automatically, and
$\mathfrak n(S):=(\R^{2n-1},[\cdot,\cdot])$ is a two-step nilpotent Lie algebra. Let $N(S)$
be the associated simply connected Lie group, and for
$X\in\mathfrak n(S)$ let $L_X$ denote the corresponding left-invariant vector field on
$N(S)$.

Declaring the left-invariant frame
$(L_{e_0},L_{e_1},\dots,L_{e_{n-1}},L_{b_1},\dots,L_{b_{n-1}})$ to be orthonormal
defines a left-invariant Riemannian metric $\bar g$ on $N(S)$. Set
\[
U:=L_{e_0},\qquad \alpha:=\bar g(U,\cdot),\qquad
\mathcal E:=\operatorname{span}\{L_{e_1},\dots,L_{e_{n-1}}\},\qquad
\mathcal B:=\operatorname{span}\{L_{b_1},\dots,L_{b_{n-1}}\},
\]
and let $\Psi\in\mathcal T_{(1,1)}(N(S))$ be the left-invariant $(1,1)$-tensor field
determined by
\[
\Psi(U)=0,\qquad \Psi(L_{e_i})=L_{b_i},\qquad
\Psi(L_{b_i})=-L_{e_i}\qquad(1\le i\le n-1).
\]
All these tensors have constant components in the frame above, so a direct
computation on the basis shows that $(\bar g,U,\alpha,\mathcal E,\mathcal B,\Psi)$
satisfies properties \textit{\ref{it:metric}}--\textit{\ref{it:psi}} of
Proposition~\ref{thm:euc-onminusone-structure-new}; in particular $TN(S)=\R
U\oplus\mathcal E\oplus\mathcal B$ is an orthogonal splitting with
$\operatorname{rank}\mathcal E=\operatorname{rank}\mathcal B=n-1$, and $(\bar
g,U,\alpha,\Psi)$ is an almost contact metric structure. By
Theorems~\ref{thm:euc-onminusone-equivalence}
and~\ref{thm:normal-adapted-connection}, this structure determines a unique Cartan
geometry of type $(G,H)$ on $N(S)$ whose Cartan connection is normal.

Since $S\neq0$ and $n\ge3$, there exist indices $i\neq j$ with $s_{ij}\neq0$, and then
\[
[L_{b_i},L_{b_j}]=s_{ij}U\notin\Gamma(\mathcal B),
\]
so $\Gamma(\mathcal B)$ is not closed under the Lie
bracket; by the Frobenius theorem, $\mathcal B$ is therefore not
integrable. Remark~\ref{rmk:recognition-correspondence-euc} shows that the Cartan
geometry on $N(S)$ can be locally a correspondence space of a Cartan geometry of type
$(G,P)$ only if $\mathcal B$ is integrable; hence, this never occurs.

This construction generalizes naturally to a much wider family of examples.
}
\end{remark}

\end{document}